\numberwithin{equation}{section}
\theoremstyle{plain}
\newtheorem{thm}{\protect\theoremname}[section]
\newenvironment{proof}[1][\protect\proofname]{\par
\normalfont\topsep6\p@\@plus6\p@\relax
\trivlist
\itemindent\parindent
\item[\hskip\labelsep
\scshape
#1]\ignorespaces
}{%
\endtrivlist\@endpefalse
}
\providecommand{\proofname}{Proof}
\theoremstyle{plain}
\newtheorem{lem}[thm]{\protect\lemmaname}
\theoremstyle{plain}
\newtheorem{prop}[thm]{\protect\propositionname}
\theoremstyle{plain}
\theoremstyle{definition}
\theoremstyle{remark}
\theoremstyle{plain}
\newtheorem{cor}[thm]{\protect\corollaryname}
\theoremstyle{definition}
\numberwithin{figure}{section}
\providecommand{\conjecturename}{Conjecture}
\providecommand{\corollaryname}{Corollary}
\providecommand{\definitionname}{Definition}
\providecommand{\examplename}{Example}
\providecommand{\lemmaname}{Lemma}
\providecommand{\propositionname}{Proposition}
\providecommand{\remarkname}{Remark}
\providecommand{\theoremname}{Theorem}
\begin{document}


\title{Uniqueness for contagious McKean--Vlasov systems in the weak feedback regime}

\author[ ]{Sean Ledger}
\author[1]{Andreas S{\o}jmark}
\affil[1]{Department of Mathematics, Imperial College London}
\date{29 November, 2018}

\maketitle

\begin{abstract}
We present a simple uniqueness argument for a collection of McKean--Vlasov problems that have seen recent interest. Our first result shows that, in the weak feedback regime, there is global uniqueness for a very general class of random drivers. By weak feedback we mean the case where the contagion parameters are small enough to prevent blow-ups in solutions.  Next, we specialise to a Brownian driver and show how the same techniques can be extended to give short-time uniqueness after blow-ups, regardless of the feedback strength. The heart of our approach is a surprisingly simple probabilistic comparison argument that is robust in the sense that it does not ask for any regularity of the solutions.
\end{abstract}


\section{Introduction}

In this paper we study uniqueness of the McKean--Vlasov problem 
\begin{align}
\label{MV}
\tag{MV}
\begin{cases}
 X_t = X_0 + Z_t - \alpha f( L_t ) \\
\tau = \inf\{ t \geq 0 : X_t \leq 0 \}  \\
L_t = \mathbb{P}(\tau \leq t),
\end{cases}
\end{align}
where $X_0 \in (0,\infty)$ is a random start point, $Z$ is a continuous stochastic process, $f:[0,1]\rightarrow\mathbb{R}$ is a continuous function, and  $\alpha \in \mathbb{R}$ is a constant. By a solution to this problem we mean an increasing c\`adl\`ag function $L : [0,\infty) \to [0,1]$ that satisfies (\ref{MV}) and is initially zero.

We are mainly interested in the cases where $\alpha>0$ and the function $f$ is non-negative and increasing. This corresponds to a \emph{contagious} system with \emph{positive feedback}: as the particle $X_t$ gets closer to the absorbing boundary at zero, its probability of being absorbed increases, so $f(L_t)$ increases, and the particle is thus pushed even closer to the absorbing boundary. From precisely this perspective, variants of (\ref{MV}) have been studied in \cite{hambly_ledger_sojmark_2018,hambly_sojmark_2018,ledger_sojmark_2018,nadtochiy_shkolnikov_2017,nadtochiy_shkolnikov_2018} motivated by the study of contagion in large financial markets, and in \cite{carillo_et_al_2015,dirt_annalsAP_2015,dirt_SPA_2015,inglis_talay_2015} motivated by nonlinear excitatory integrate-and-fire models for large networks of electrically coupled neurons. In view of these applications, a particularly relevant setting is that of $f(x)=x$ and $\alpha>0$ together with $Z_t= \sqrt{1-\rho^2} B_t + \rho \beta_t$, where $B_t$ is a Brownian motion and $\beta_t$ is a fixed Brownian path. This problem corresponds to a pathwise realisation of the `conditional' (and contagious) McKean--Vlasov system
\begin{align}
\label{CMV}
\tag{CMV}
\begin{cases}
X_t = X_0 + \sqrt{1-\rho^2} B_t +  \rho B^0_t  - \alpha L_t \\
\tau = \inf\{ t \geq 0 : X_t \leq 0 \}  \\
L_t = \mathbb{P}(\tau \leq t \mid B^0 ),
\end{cases}
\end{align}
 where $(B,B^0)$ is a 2d Brownian motion independent of $X_0$. In Section 2 we demonstrate the global well-posedness of this system under a smallness condition on $\alpha>0$ defining the \emph{weak feedback regime}.
 
 In \cite{ledger_sojmark_2018} it is shown that `relaxed' solutions to (\ref{CMV})---for which the adaptedness of $L$ to $B^0$ is relaxed in a suitable way---arise as limit points of the following finite particle system:~$N$ particles move according to Brownian motions correlated through $B^0$ (known as the `common noise'), except that when a particle hits the origin, it is absorbed, and this then has a contagious effect causing all the other particles jump down by $\alpha/N$ with $\alpha>0$, possibly leading to more particles being absorbed, and hence further rounds of downward jumps.
 
 If, as in \cite{ledger_sojmark_2018}, each particle measures the `distance-to-default' of a financial entity, and absorption at zero corresponds to default, then such a positive feedback loop could model a \emph{cascade} of bankruptcies caused by the interplay between default contagion ($\alpha>0$) and common exposures ($\rho>0)$. Similarly, motivated by \cite{cont_eric_2018}, each particle could model the log leverage ratio of a bank (defined as the log of capital over assets) for which a minimum value is enforced by regulation. When reaching this minimum, the banks must sell assets to increase their leverage ratios, and if these sales pertain to common illiquid assets ($\rho>0$), then it depresses the price of these, hence causing a drop in the leverage ratios of the other banks ($\alpha>0$). Note, however, that a bank reaching the threshold should now be reset to some higher leverage ratio (after the selling of assets) instead of defaulting, but the main mathematical difficulty is still the positive feedback from hitting a boundary. Also, it would be natural for $\rho$ to depend on $L$ so that the selling of the common asset makes the banks less correlated.
 
By a simple sign change, this latter system can be rephrased as a model for the `spiking' of electrically coupled neurons, which has been studied in \cite{dirt_annalsAP_2015, dirt_SPA_2015}. In this case, each particle models the membrane potential of a neuron, and when this potential reaches an upper threshold the neuron is said to `spike': that is, it emits an electrical signal which causes all the other potentials to increase by $\alpha/N$ and the spiking neuron itself is reset to a predetermined value.
 
\subsection{Blow-ups and the physical jump condition}\label{subsec:PJC}
As a result of the positive feedback, for large enough $\alpha>0$, the solutions to (\ref{CMV}) can develop jump discontinuities which we call \emph{blow-ups}, as in \cite{hambly_ledger_sojmark_2018, ledger_sojmark_2018}. In order to study uniqueness in these cases, it is necessary to resolve ambiguity at the blow-ups (see \cite[Prop.~1.2 \& Sect.~2]{hambly_ledger_sojmark_2018} for illustrations and further discussion). This is achieved by specifying that the appertaining jump sizes must satisfy the physical jump condition
\begin{equation}
\label{PJC}
\tag{PJC}
\Delta L_t = \inf\{ x > 0 : \nu_{t-}( \hspace{0.2pt} [0,\alpha x ]\hspace{0.2pt}  ) < x \},
\end{equation}
with probability $1$, where $\nu_{t-}$ is the left limit of $\nu_t$ defined by
\[
\nu_t (S) := \mathbb{P}(X_t \in S, \, t < \tau \mid B^0) \quad \text{for all } S \in \mathcal{B}(\mathbb{R}),
\]
with $\tau=\inf \{ t\geq0 : X_t \leq 0 \}$ and $\nu_0$ is the law of $X_0$. That is, the flow $\nu$ gives the marginal laws of $X$ absorbed at the origin conditional on the common noise $B^0$. 
We note that $L$ is deterministic in (\ref{MV}), so naturally the condition (\ref{PJC}) should then be understood in terms of $\nu_t=\mathbb{P}(X_t \in \cdot\,, \, t < \tau)$ just as for (\ref{CMV}) with $\rho=0$.

In short, (\ref{PJC}) is the correct specification of the jump sizes for two reasons. First, in the case of (\ref{CMV}), the condition gives the minimal jump sizes that are necessary for $L$ to be c\`adl\`ag \cite[Prop.~3.3]{ledger_sojmark_2018}. Secondly, the solutions constructed in \cite{dirt_SPA_2015, ledger_sojmark_2018} are obtained as limit points of finite particle systems (as described above), for which the corresponding discrete version of (\ref{PJC}) gives the only sensible physical description of the jump behaviour, and this discrete condition then yields (\ref{PJC}) in the limit. It should also be noted that  generic (possibly non-physical) solutions to (\ref{MV}) can be constructed directly from a generalised Schauder fixed point theorem as proved by Nadtochyi and Shkolnikov \cite{nadtochiy_shkolnikov_2018} under suitable conditions on $Z$ and $f$.

With the physical jump condition in place, it is then natural to only consider initial densities that do not want to jump immediately (in alignment with $L$ being c\`adl\`ag and initially zero). Indeed, it is natural to restrict solely  to states that could be reached by the evolving system and, recalling that (\ref{PJC}) is a rule for the left limit $\nu_{t-}$, this translates as those initial conditions $\nu_0$ satisfying
\begin{equation}\label{intro_restrict_intial}
\inf\{ x > 0 : \nu_{0}( \hspace{0.2pt} [0,\alpha x ]\hspace{0.2pt}  ) < x \} = 0
\end{equation}
in the case $f(x)=x$ (for general $f$, the left-hand side in \eqref{intro_restrict_intial} should be replaced by the right-hand side in \eqref{pjc_general_f} with $t=0$ and $L_{0}=0$). While this condition is necessary for c\`adl\`ag solutions under Brownian drivers as in (\ref{CMV}) by \cite[Prop.~3.3]{ledger_sojmark_2018}, it is possible to construct c\`adl\`ag solutions to (\ref{MV}) violating this condition for more general $Z$. Specifically, in analogy with \cite[(3.15)]{nadtochiy_shkolnikov_2018}, one can take $Z_t:=B_t+A_t$ with $A_t:=\alpha \mathbb{P}(X_0+\inf_{s<t}B_s\leq 0)$ and $f(x)=x$. If $X_0$ is such that the left-hand side of (\ref{intro_restrict_intial}) is strictly positive, and it is \emph{not} imposed that this positivity should cause $L$ to jump immediately (as \eqref{PJC} would imply\label{key}), then $L:=A$ gives a continuous solution to (\ref{MV}) despite violating (\ref{intro_restrict_intial}) by the choice of $X_0$.

In this paper, the only mathematical contributions concerning jumps will be for the case when $Z$ is a standard Brownian motion and $f(x) = x$ (see Section 3). However, if considering a generic feedback function $f$, the condition corresponding to (\ref{PJC}) is simply
\begin{equation}\label{pjc_general_f}
\Delta L_t = \inf \bigl\{ x > 0 : \nu_{t-}\bigl( \hspace{0.1pt} \bigl[ 0,\,\alpha\cdot(f(x+L_{t-}) - f(L_{t-})) \bigr]\hspace{0.1pt} \bigr) < x \bigr\}.
\end{equation}

\subsection{PDE viewpoint in the Brownian case}\label{subsec:PDE}
We now turn to a brief review of existing PDE approaches to the fundamental setting of $Z_t=B_t$ and $f(x)=x$, where $B$ is a standard Brownian motion. If we let $V_t$ denote the density of $\nu_t$, i.e.~the law of $X_t$ absorbed at the origin, then we arrive (at least formally) at the PDE
\begin{equation}\label{PDE}
\partial_{t}V_{t}(x)={\textstyle \frac{1}{2}}\partial_{xx}V_{t}(x)+\alpha L_{t}^{\prime}\partial_{x}V_{t}(x),\qquad L_{t}^{\prime}={\textstyle \frac{1}{2}}\partial_{x}V_{t}(0),\qquad V_{t}(0)=0,
\end{equation}
for $x\in(0,\infty)$. Here the contagious feedback emerges as a transport term that pushes mass towards the origin at a rate proportional to the current flux across the boundary.

Setting $v(t,x):=-\alpha V_t(x-\alpha L_t)$, for $\alpha>0$, the equations for $V$ and $L$ 
become 
\begin{equation}\label{stefan_prob}
\partial_tv=\textstyle{\frac{1}{2}}\partial_{xx}v \;\; \text{on} \;\; (\alpha L_t,\infty), \qquad \partial_xv(t,\alpha L_t)=-\textstyle{\frac{1}{2}} \alpha L'_t,\qquad v(t,\alpha L_t)=0.
\end{equation}
This is a Stefan problem modelling the freezing of a supercooled liquid occupying the semi-infinite strip $(\alpha L_t,\infty)$: the liquid is initially supercooled to a temperature $v(0,x)=-\alpha V_0(x)$ that is below the freezing point $v=0$, and the evolving `freezing front' is given by $x=\alpha L_t$. Forgetting for a moment that $V_0$ is a probability density, if $v(0,\cdot)=-c$, then the well-posedness of this system displays a clear dichotomy: for $c<1$ it admits an explicit similarity solution, while no solution can exist for $c\geq1$. This situation motivates the recent analysis of Dembo and Tsai \cite{dembo_tsai_2017}, which investigates the critical case $c=1$ via the scaling behaviour of a discretised particle approximation.

A related line of study is \cite{cell_1,cell_2,cell_3}, which considers the PDE
\begin{equation}\label{keller_segel}
\partial_{t} u(t,x)=\textstyle{\frac{1}{2}}\partial_{xx}u(t,x)+ u(t,0)\partial_{x}u(t,x), \qquad\partial_x u(t,0)=-u(t,0)^2,
\end{equation}
for $x\in(0,\infty)$, as a model for cell polarisation: $u$ is the density of molecular markers on a cell, identified with the positive half-line, and the time evolution of the density is coupled with the concentration of markers on the cell membrane at $x=0$. If we set $\tilde{V}_t(x):=\alpha^{-1}\!\int_0^x\!u(t,y)dy$, then we get back (\ref{PDE}) with initial condition $\tilde{V}_0(x)=\alpha^{-1}\!\int_0^x\!u_0(y)dy$. Calvez et al.~\cite{cell_1,cell_2} show that (\ref{keller_segel}) admits a global weak solution if $\int_0^\infty\!u_0(y)dy\leq1$, while $u$ explodes to infinity in finite time if $\int_0^\infty\!u_0(y)dy>1$ with $u_0$ non-increasing (see also \cite{cell_3} for some refinements).

Note that in both of the above examples, the solvability depends critically on how the initial condition compares to $\alpha^{-1} $. This same relationship will play an important role in the results of the next sections. Returning for now to the supercooled Stefan problem, there is an early literature on its well-posedness for slight variations of (\ref{stefan_prob}) on a finite strip. This goes back to Fasano and Primicerio \cite{fasano_primicerio_1981,fasano_primicerio_1983}, who give conditions on the initial datum under which the system is uniquely solvable in the class of classical solutions for all times or up to some finite explosion time.

If a source term $L'_t\delta_0(x-c)$ is added to (\ref{PDE}), for some $c>0$, where $\delta_0$ is the delta function, then the mass of the system is preserved. Recalling the finite particle systems described earlier, this corresponds to the case where particles are instantly reset to a predetermined value $c$ upon reaching the boundary (instead of being absorbed). By reducing the analysis to that of a Stefan-like problem, Carillo et al.~\cite{carillo_et_al_2013} show that, for $\alpha\leq0$, a unique classical solution exists for all times, while, for $\alpha>0$, classical solutions exists up to a possibly finite explosion time (for initial conditions that are $C^1$ up to the boundary and vanish there). For further background, see also \cite{caceres_et_al_2011, carillo_et_al_2015}.

At this point, it is important to emphasise that the solutions to (\ref{MV}) and (\ref{CMV}) from \cite{dirt_SPA_2015, ledger_sojmark_2018,nadtochiy_shkolnikov_2018} are global in time: they do not cease to exist at some explosion time, even despite the fact that there must be blow-ups in the form of jump discontinuities for large enough $\alpha>0$ \cite[Thm.~1.1]{hambly_ledger_sojmark_2018}.

\subsection{Recent history of the problem in the Brownian case}\label{subsec:recent_history}

Aside from the existence results \cite{ledger_sojmark_2018,nadtochiy_shkolnikov_2018}, the literature on (\ref{MV}) is centred on the case where $Z$ is a Brownian motion, $B$, up to an absolutely continuous drift. For clarity, we thus focus on $Z_t=B_t$ and $f(x)=x$ in this subsection. Motivated by the incomplete results for the PDE viewpoint, when $\alpha>0$, Delarue et al.~\cite{dirt_annalsAP_2015,dirt_SPA_2015} introduced what is essentially a generalised probabilistic notion of solution for the PDE problem \eqref{PDE}, namely (\ref{MV}). Recently, the study of uniqueness and regularity for this problem has been continued through independent efforts of Nadtochiy and Shkolnikov \cite{nadtochiy_shkolnikov_2017} and the authors of this paper together with Hambly \cite{hambly_ledger_sojmark_2018}. Here is a brief overview of the results:
\begin{itemize}
	\item Let $X_0=x_0>0$. By \cite{dirt_annalsAP_2015} there is an $\alpha_0\in(0,1]$ such that, for any $\alpha\in(0,\alpha_0)$, (\ref{MV}) has a $C^1$ solution $L$ on any time-interval, and the solution is unique in this class. The result is formulated for the (neuronal) version where particles are reset upon hitting the boundary.
	\item In the same setting as \cite{dirt_annalsAP_2015}, for any $\alpha>0$, \cite{dirt_SPA_2015} obtains global solutions to (\ref{MV}) as limit points of the (neuronal) particle system described earlier. Moreover, there is propagation of chaos provided there is uniqueness among the limit points.
	\item Let $V_0\in H_0^1(0,\infty)$. For any $\alpha>0$, \cite{nadtochiy_shkolnikov_2017} gives a solution $L$ to (\ref{MV}) up to an explosion time $t_\star>0$ such that $L'\in L^2(0,t)$ for $t<t_\star$ and $\Vert L' \Vert_{L^2(0,t)}$ explodes as $t\uparrow t_\star$. Moreover, there is uniqueness up to $t_\star$ in this class of solutions.
	\item Let $V_0\in L^\infty$ and $V_0(x)\leq Cx^\beta$ for some $C,\beta>0$. For any $\alpha>0$, \cite{hambly_ledger_sojmark_2018} gives a solution $L$ to (\ref{MV}) up to an explosion time $t_\star>0$ such that $L'\in L^2(0,t)$ for $t<t_\star$ and $\Vert L' \Vert_{L^2(0,t)}$ explodes as $t\uparrow t_\star$. Moreover, there is uniqueness up to $t_\star$ among all candidate solutions, and $|L'_t|\leq K_0t^{-\frac{1-\beta}{2}}$ on $[0,t_0]$ for any $t_0<t_\star$.
\end{itemize}

Note that the final result can be combined with that of the second bullet point to give propagation of chaos for (\ref{CMV}, $\rho=0$) up to the explosion time (or globally for small $\alpha>0$). Based on the above results, numerical schemes for (\ref{CMV}, $\rho=0$) up to the explosion time have been proposed and analysed by Kaushansky and Reisinger \cite{vadim_reisinger_2018} and Kaushansky, Lipton and Reisinger~\cite{lipton_vadim_reisinger_2018}.

\subsection{Overview of the paper and main results}

In Section 2 we prove global uniqueness of (\ref{MV}) under a smallness condition on $\alpha$ (Theorem \ref{Results_Thm_UniquenessSimple}) which defines the weak feedback regime. Based on this, we then show that (\ref{CMV}) is globally well-posed in the weak feedback regime (Theorem \ref{Results_Thm_CMV}). In Section 3 we specialise to (\ref{MV}) with $Z_t=B_t$ and $f(x)=x$ for a general $\alpha>0$ without the smallness condition. In this setting, we extend the techniques from Section 2 to give short-time uniqueness under a mild assumption on the shape of the initial density near zero (Theorem \ref{Intro_Prop_ShortTimeUniqueness}). Finally, we use this result to prove local uniqueness of the problem after a blow-up (Theorem \ref{thm_global_uniqafterblowup}).


\section{Global uniqueness for weak feedback}

The main objective of this section is to prove uniqueness of solutions to (\ref{MV}) in the weak feedback regime, that is, when the feedback parameter  $\alpha$ satisfies a suitable smallness condition (Theorem \ref{Results_Thm_UniquenessSimple}). Our method of proof is based on a surprisingly simple comparison argument, which is of `zeroth order' in the sense that it makes no use of differential or analytic properties of the solution, $L$, as a function of time or of the initial density, $V_0$, as a function of space. This is very natural probabilistically, as the McKean--Vlasov formulation (\ref{MV}) does not involve any derivatives---in contrast to the PDE point of view \eqref{PDE}. The value of our zeroth order approach is most evident when there is a rough drift in the driving noise, $Z$, as the analytical tools developed in the earlier literature \cite{carillo_et_al_2013, dirt_annalsAP_2015, hambly_ledger_sojmark_2018, nadtochiy_shkolnikov_2017} cannot be applied to such a setting.

\subsection{Uniqueness in the weak feedback regime}\label{subsec:unique_weak}

Recall that we are interested in the McKean--Vlasov system \eqref{MV} under the general hypothesis that
\begin{equation}\tag{H}\label{hyp_H}
  Z \text{ is a continuous stochastic process}, \quad Z\perp X_0, \quad \text{and} \quad f\in C([0,1],\mathbb{R}).
\end{equation}
Our first task is to carry out the comparison argument alluded to above. We present it as a lemma separately from Theorem \ref{Results_Thm_UniquenessSimple}, since we will need it again in Section 3. A visualisation of its proof is provided in Figure \ref{fig:visual}, which serves to highlight the geometric flavour of both this lemma and the resulting Theorem \ref{Results_Thm_UniquenessSimple}.

\begin{lem}[Comparison argument]
\label{Lem_Unique_Comparison}
Suppose $L$ and $\bar{L}$ are two solutions to \eqref{MV} under hypothesis \eqref{hyp_H} with initial condition $\nu_0$. Then
\begin{multline*}
| L_t - \bar{L}_t |
	\leq \mathbb{E}\Bigl[\nu_{0} \Bigl( \sup_{s \leq t} \{ \alpha f(L_s) - Z_s \}, \, \sup_{s \leq t} \{  \alpha f(\bar{L}_s) - Z_s \} \Bigr) \Bigr]\\
	\vee 
	\mathbb{E}\Bigl[ \nu_{0} \Bigl( \sup_{s \leq t} \{ \alpha f(\bar{L}_s) - Z_s \}, \, \sup_{s \leq t} \{ \alpha f( L_s) - Z_s \}\Bigr) \Bigr], 
\end{multline*}
where `$\,\lor$' denotes the maximum of the two values on the right-hand side.
\end{lem}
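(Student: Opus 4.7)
My plan is to remove the fixed-point coupling in \eqref{MV} by re-expressing the default event purely in terms of the driver $Z$ and the initial condition $X_0$, and then exploit the independence $X_0 \perp Z$ through conditioning. Write
\[
M_t := \sup_{s \leq t}\{\alpha f(L_s) - Z_s\}, \qquad \bar M_t := \sup_{s \leq t}\{\alpha f(\bar L_s) - Z_s\}.
\]
The first step is the pathwise observation that $\{\tau \leq t\} = \{X_0 \leq M_t\}$ (a.s.), which follows by rearranging $X_s = X_0 + Z_s - \alpha f(L_s)$: evaluating at $s = \tau$ yields the inclusion $\{\tau \leq t\} \subseteq \{X_0 \leq M_t\}$, and the reverse inclusion is an easy consequence of the càdlàg property of $X$ together with the defining supremum.

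Since $L$ is deterministic in \eqref{MV}, $M_t$ and $\bar M_t$ are $\sigma(Z)$-measurable, so by \eqref{hyp_H} they are independent of $X_0$. Conditioning on $Z$ and using the tower property gives
\[
L_t = \P(X_0 \leq M_t) = \E\bigl[\nu_0([0, M_t])\bigr], \qquad \bar L_t = \E\bigl[\nu_0([0, \bar M_t])\bigr].
\]
Subtracting and splitting on the sign of $M_t - \bar M_t$, the integrand $\nu_0([0,M_t]) - \nu_0([0,\bar M_t])$ equals $\nu_0((\bar M_t, M_t])$ on $\{M_t \geq \bar M_t\}$ and $-\nu_0((M_t, \bar M_t])$ on the complement. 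Writing the two resulting non-negative expectations as $A$ and $B$, one has $L_t - \bar L_t = A - B$, and therefore $|L_t - \bar L_t| = |A - B| \leq A \vee B$, which is exactly the stated bound once $\nu_0(a,b)$ is read as the mass $\nu_0((a \wedge b, a \vee b])$ (empty if $a \geq b$).

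The only delicate point is the verification of $\{\tau \leq t\} = \{X_0 \leq M_t\}$ when $L$ has jumps: since $f$ is only assumed continuous, $X$ may admit upward jumps, in which case the supremum in $M_t$ can be approached only through a left limit and the identity may fail on the boundary event $\{X_0 = M_t\}$. However, this contributes at most $\E[\nu_0(\{M_t\})]$, which can be absorbed into the right-hand side by the usual half-open/closed interval convention, and does not affect the inequality. Beyond this minor bookkeeping, the proof is essentially a one-line rewriting combined with conditioning; the real content is the reformulation in step one, which converts the full nonlinear McKean–Vlasov fixed point into a straightforward comparison of two CDF evaluations.
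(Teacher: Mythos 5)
Your proof is correct and takes essentially the same route as the paper: both arguments rewrite the absorption event as $\{X_0 \le \sup_{s\le t}\{\alpha f(L_s)-Z_s\}\}$ and exploit $X_0\perp Z$, the only (cosmetic) difference being that the paper bounds $L_t-\bar L_t\le \mathbb{P}(\tau\le t<\bar\tau)$ and expresses this probability, while you derive exact identities for $L_t,\bar L_t$ and use $|A-B|\le A\vee B$. Your caveat about the boundary event $\{X_0=M_t\}$ is handled acceptably, since the lemma's interval notation is agnostic about endpoints and the subsequent applications assume $\nu_0$ has a density.
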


\begin{proof} Let $L$ and $\bar{L}$ be any two solutions to (\ref{MV}) under hypothesis \eqref{hyp_H} coupled via the same stochastic process $Z$ and the same random starting point $X_0$ distributed according to $\nu_0$. That is, the two solutions $L$ and $\bar{L}$ are given by
\[
\begin{cases}
 X_t = X_0 + Z_t - \alpha f(L_t) \\
\tau = \inf\{ t \geq 0 : X_t \leq 0 \}  \\
L_t = \mathbb{P}(\tau \leq t),
\end{cases}
\quad  \text{and} \quad
\begin{cases}
 \bar{X}_t = X_0 + Z_t - \alpha f(\bar{L}_t) \\
\bar{\tau} = \inf\{ t \geq 0 : \bar{X}_t \leq 0 \}  \\
\bar{L}_t = \mathbb{P}(\bar{\tau} \leq t).
\end{cases}
\]
Then we have
\begin{align*}
L_t - \bar{L}_t 
	&\leq \mathbb{P}\bigl(\inf_{s\leq t} X_s \leq 0, \, \inf_{s\leq t} \bar{X}_s > 0\bigr) \\
	&= \mathbb{P}\Bigl(\inf_{s\leq t} \{ X_0 + Z_s -\alpha f(L_s)\} \leq 0,\, \inf_{s\leq t} \{ X_0 + Z_s -\alpha f( \bar{L}_s )\} > 0\Bigr) \\
	&= \mathbb{P}\Bigl(\sup_{s\leq t} \{ \alpha f(\bar{L}_s) - Z_s\} < X_0 \leq \sup_{s\leq t} \{ \alpha f(L_s) - Z_s\} \Bigr).
\end{align*}
In turn, conditioning on $X_0$ gives
\begin{align*}
L_t - \bar{L}_t 
	&\leq \mathbb{E} \Bigl[ \int^\infty_0 \mathbf{1}\bigl\{ \sup_{s\leq t} \{ \alpha f( \bar{L}_s ) - Z_s\} < x_0 \leq  \sup_{s\leq t} \{ \alpha f(L_s) - Z_s\} \bigr\}  \nu_0(dx_0) \Bigr] \\
	&= \mathbb{E} 
	\Bigl[ \nu_0 \Bigl(\sup_{s\leq t} \{ \alpha f( \bar{L}_s ) - Z_s\},\, \sup_{s\leq t} \{ \alpha f(L_s) - Z_s\}  \Bigr)  \Bigr].
\end{align*}
Finally, we can notice that the same inequality with $L$ and $\bar{L}$ interchanged also holds, by symmetry, and therefore we have the result. 
\end{proof}

\begin{figure}
\begin{center}
\includegraphics[width=0.32\textwidth]{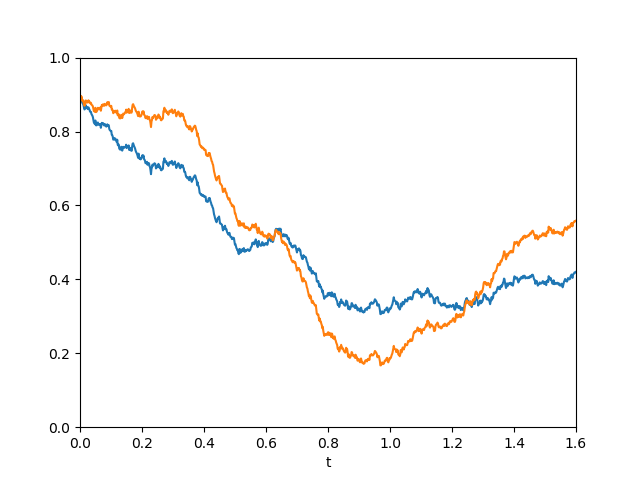} \hspace{-0.5cm}
\includegraphics[width=0.32\textwidth]{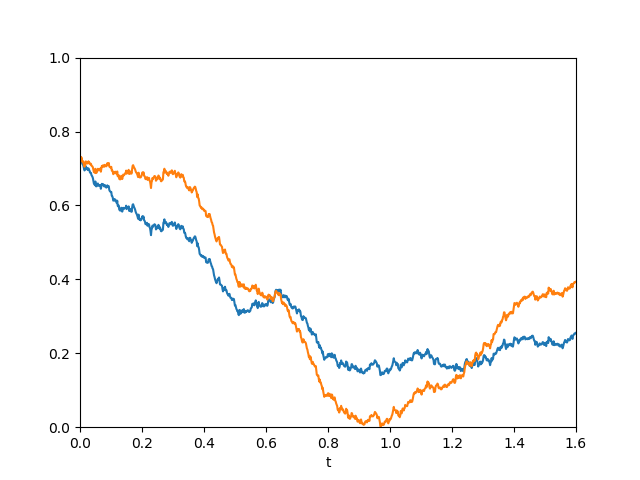} \hspace{-0.5cm}
\includegraphics[width=0.32\textwidth]{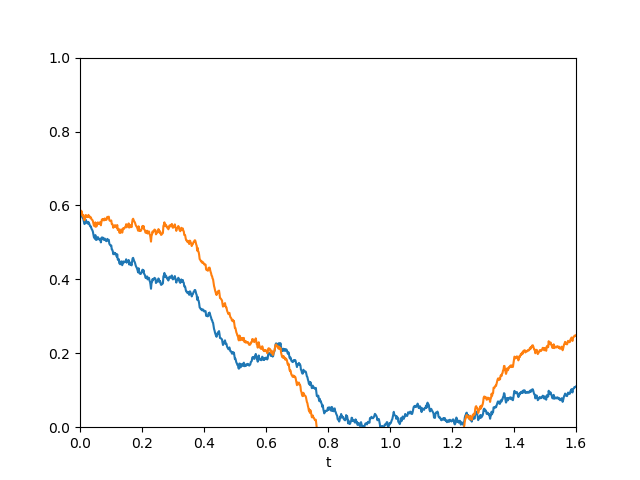} 
\caption{\label{fig:visual}The red curve is $X$ and the blue curve is $\bar{X}$, from Lemma \ref{Lem_Unique_Comparison}, coupled through the initial value $x_0$ and a given realisation of the driver $Z$. Writing $L_t - \bar{L}_t \leq \mathbb{P}( \tau \leq t < \bar{\tau})$ with $t=1.6$, we see that, as $x_0$ decreases, we only get contributions to this probability from when the red curve first hits the $x$-axis (at $x_0=0.72$ in the second plot) until the blue curve first hits it (at $x_0=0.59$ in the third plot). Hence the probability equals $\nu_0(0.59,0.72)$, where $0.59= \sup_{s\leq t}\{\alpha f(\bar{L}_s) - Z_s\}$ and $0.72= \sup_{s\leq t}\{\alpha f(L_s) - Z_s\}$. In terms of Theorem \ref{Results_Thm_UniquenessSimple}, one can see directly that $0.72-0.59\leq \alpha \Vert f(L) - f(\bar{L})\Vert_t$, since the latter bounds the distance between the two curves on $[0,t]$.}
\end{center}
\end{figure} 

With a view towards the statement and proof of Theorem \ref{Results_Thm_UniquenessSimple} below, it is convenient to introduce the notation
\[
\Vert f \Vert_{t} := \sup_{s \leq t} |f(s)| \quad \text{and} \quad 	\Vert f \Vert_{ \mathrm{Lip}(x) } := \sup_{ y \neq z \in [0,x] } \frac{|f(y) - f(z)|}{|y-z|}. 
\]
Theorem \ref{Results_Thm_UniquenessSimple} below is an almost immediate consequence of Lemma \ref{Lem_Unique_Comparison}, once we place suitable restrictions on the relation between the inputs. Specifically, we introduce a `smallness' condition (\ref{alpha_sc}) on the feedback parameter $\alpha$ given $f$ and $\nu_0$, which defines the \emph{weak feedback regime} of \eqref{MV} for any choice of $Z$.

\begin{thm}[Uniqueness in the weak feedback regime] 
\label{Results_Thm_UniquenessSimple}
Let $L$ and $\bar{L}$ be any two solutions to \eqref{MV} under hypothesis \eqref{hyp_H}, and suppose the initial condition $\nu_0$ has a density $V_0 : (0,\infty) \to (0,\infty)$.  If the feedback parameter $\alpha \in \mathbb{R} $ is such that
\begin{equation}\label{alpha_sc}
|\alpha| \cdot \Vert V_0 \Vert_\infty \cdot \Vert f \Vert_{ \mathrm{Lip}(L_{T} \vee \bar{L}_{T}) } < 1,
\end{equation}
for some $T>0$, then $L=\bar{L}$ on $[0,T]$. In particular, if $f$ has a global Lipschitz constant and $V_0$ is bounded, then \eqref{alpha_sc} gives a range of $\alpha$ for which there is global uniqueness of \eqref{MV} independently of the choice of $Z$.
\end{thm}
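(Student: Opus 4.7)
The plan is to apply Lemma~\ref{Lem_Unique_Comparison} and combine it with the boundedness of the initial density and the Lipschitz bound on $f$ to obtain a self-referential estimate of the form $\Vert L - \bar L \Vert_t \leq c \Vert L - \bar L \Vert_t$ with $c < 1$, forcing $L = \bar L$ on $[0,T]$.

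First, I would fix $t \in [0,T]$ and start directly from the estimate in Lemma~\ref{Lem_Unique_Comparison}. Each of the two terms in the maximum has the form $\mathbb{E}[\nu_0(A_t, B_t)]$, where the endpoints are the suprema $\sup_{s \leq t}\{\alpha f(L_s) - Z_s\}$ and $\sup_{s \leq t}\{\alpha f(\bar L_s) - Z_s\}$. Since $\nu_0$ has a density bounded by $\Vert V_0 \Vert_\infty$, we get $\nu_0(A_t, B_t) \leq \Vert V_0 \Vert_\infty (B_t - A_t)_+$, hence
\[
|L_t - \bar L_t| \leq \Vert V_0 \Vert_\infty \, \mathbb{E}\Bigl[\, \Bigl| \sup_{s \leq t}\{\alpha f(L_s) - Z_s\} - \sup_{s \leq t}\{\alpha f(\bar L_s) - Z_s\} \Bigr| \,\Bigr].
\]

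Second, I would use the elementary inequality $|\sup_s U_s - \sup_s W_s| \leq \sup_s |U_s - W_s|$. The crucial observation is that the driver $Z_s$ appears identically in both supremum expressions and therefore cancels pointwise in $s$, leaving
\[
\Bigl| \sup_{s \leq t}\{\alpha f(L_s) - Z_s\} - \sup_{s \leq t}\{\alpha f(\bar L_s) - Z_s\} \Bigr| \leq |\alpha| \sup_{s \leq t} |f(L_s) - f(\bar L_s)|.
\]
Since $L$ and $\bar L$ are non-decreasing, for every $s \leq t \leq T$ both $L_s$ and $\bar L_s$ lie in $[0, L_T \vee \bar L_T]$, so the restricted Lipschitz bound applies and yields $|f(L_s) - f(\bar L_s)| \leq \Vert f \Vert_{\mathrm{Lip}(L_T \vee \bar L_T)} |L_s - \bar L_s|$. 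Combining with the previous step,
\[
|L_t - \bar L_t| \leq |\alpha| \cdot \Vert V_0 \Vert_\infty \cdot \Vert f \Vert_{\mathrm{Lip}(L_T \vee \bar L_T)} \cdot \Vert L - \bar L \Vert_t.
\]

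Third, the right-hand side is non-decreasing in $t$, so taking the supremum over $t' \leq t$ on the left gives the same inequality for $\Vert L - \bar L \Vert_t$. Invoking the smallness condition~\eqref{alpha_sc} then yields $(1 - c)\Vert L - \bar L \Vert_t \leq 0$ for some $c < 1$; since $\Vert L - \bar L \Vert_t \leq 1 < \infty$ trivially, we conclude $\Vert L - \bar L \Vert_t = 0$ for every $t \leq T$, and thus $L \equiv \bar L$ on $[0,T]$. The final statement follows immediately: if $f$ is globally Lipschitz and $V_0$ is bounded, then $\Vert f \Vert_{\mathrm{Lip}(L_T \vee \bar L_T)}$ can be replaced by a global constant independent of $T$ and of the particular solutions, so~\eqref{alpha_sc} then defines a $Z$-independent range of $\alpha$.

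The proof is essentially a direct consequence of Lemma~\ref{Lem_Unique_Comparison} with no real obstacle. The only point requiring mild care is that the Lipschitz seminorm is measured only on the interval $[0, L_T \vee \bar L_T]$, which is handled by the monotonicity of $L$ and $\bar L$ (automatic since they are cumulative distribution functions of $\tau$ and $\bar\tau$). It is precisely this \emph{a~priori} control on the range of $L, \bar L$ that makes it unnecessary to assume that $f$ is globally Lipschitz for the first part of the statement.
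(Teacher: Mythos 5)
Your proposal is correct and follows essentially the same route as the paper: both apply Lemma~\ref{Lem_Unique_Comparison}, bound the gap between the two running suprema by $|\alpha|\,\Vert f(L)-f(\bar L)\Vert_t$ (you via $|\sup-\sup|\leq\sup|\cdot|$, the paper via a one-sided shift of the lower endpoint, a purely cosmetic difference), and then close the contraction using $\Vert V_0\Vert_\infty$ and the local Lipschitz bound on $[0,L_T\vee\bar L_T]$. Your handling of $\alpha\in\mathbb{R}$ directly through $|\alpha|$ even spares the paper's preliminary sign reduction.
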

\begin{proof} We only need to consider $\alpha\geq0$, as we can otherwise absorb the minus sign into $f$. Let $L$ and $\bar{L}$ denote any two solutions to (\ref{MV}), and observe that
\[
f(\bar{L}_s) = f(L_s) + f(\bar{L}_s) - f(L_s) \geq f(L_s) - \Vert f(L) - f(\bar{L}) \Vert_r \qquad \text{for } s \leq r.
\]
Applying this inequality to the bound from Lemma \ref{Lem_Unique_Comparison} gives
\begin{multline*}
| L_r - \bar{L}_r |
	\leq \mathbb{E}\bigl[\nu_{0}\bigl( - \alpha \Vert f(L) - f(\bar{L}) \Vert_r + \sup_{s \leq r} \{ \alpha f(L_s) - Z_s \}, \,\sup_{s \leq r} \{ \alpha f(L_s) - Z_s \}\bigr) \bigr]\\
	\vee 
	\mathbb{E}\bigl[ \nu_{0}\bigl( - |\alpha| \Vert f(L) - f(\bar{L}) \Vert_r + \sup_{s \leq r} \{ \alpha  f(\bar{L}_s) - Z_s \},\, \sup_{s \leq r} \{ \alpha f(\bar{L}_s) - Z_s \}\bigr) \bigr].
\end{multline*}
Writing $Z^*_r := \sup_{s \leq r} \{ \alpha f(L_s) - Z_s \}$ and $\bar{Z}^*$ for the same running supremum corresponding to $\bar{L}$, the above becomes
\[
| L_r - \bar{L}_r | \leq \mathbb{E} \Big[ \int^{ Z^*_r }_{ Z^*_r - \alpha \Vert f(L) - f(\bar{L}) \Vert_r } \!V_0(x) dx \Big]
	\vee  \mathbb{E}\Big[  \int^{ \bar{Z}^*_r }_{ \bar{Z}^*_r - \alpha \Vert f(L) - f(\bar{L}) \Vert_r } \!V_0(x) dx \Big],
\] 
and so it is immediate that, for all $r\geq0$,
\[
| L_r - \bar{L}_r | 
	\leq \alpha \Vert V_0 \Vert_\infty \Vert f(L) - f(\bar{L}) \Vert_r. 
\]
Using the local Lipschitz property of $f$, as implied by \eqref{alpha_sc}, and taking a supremum over $r \in [0,T]$, we then get
\[
\Vert L - \bar{L} \Vert_{T}
	\leq \alpha \Vert V_0 \Vert_\infty \Vert f \Vert_{ \mathrm{Lip}( L_{T} \vee \bar{L}_{T} ) }  \Vert L - \bar{L} \Vert_{T}.
\]
Therefore, the smallness condition \eqref{alpha_sc} forces $\Vert L - \bar{L} \Vert_T = 0$, as required. This completes the proof.
\end{proof}
Fix a choice of the inputs $f$, $V_0$, and $\alpha$ such that the smallness condition (\ref{alpha_sc}) holds on some interval $[0,T]$, and suppose the stochastic driver $Z_t$ has a density $p_t$. Then $\nu_t$ has a density $V_t$ with $\Vert V_t \Vert_{\infty}\leq \Vert V_0 \Vert_{\infty} $, as can be seen from the simple estimate
	\begin{equation}\label{density_bound}
	\nu_{t}(S)\leq \int_0^\infty \int_S p_t(x_0+z-\alpha f(L_t)) V_0(x_0)dzdx_0 \leq \Vert V_0 \Vert_{\infty} \cdot |S|,
	\end{equation}
for all $S\in\mathcal{B}(\mathbb{R})$. Therefore, the smallness condition (\ref{alpha_sc}) enforces $\Delta L_t=0$ for all $t\in[0,T]$, by virtue of the physical jump condition \eqref{PJC}, so the unique solution to \eqref{MV} in the weak feedback regime on $[0,T]$ is continuous on all of $[0,T]$.

\subsection{Applications of the main uniqueness result}\label{subsec:applications_unqiue}

We now present two interesting consequences of Theorem \ref{Results_Thm_UniquenessSimple}.

\begin{thm}[Well-posedness of \eqref{CMV} for weak feedback]
\label{Results_Thm_CMV} Consider the conditional McKean--Vlasov system \eqref{CMV} with $\rho\in[0,1)$ and $\alpha>0$ under the constraint $\Vert V_0 \Vert_\infty < \alpha^{-1}$. Then there is a unique solution to \eqref{CMV}, and this solution arises as the unique mean-field limit of the finite particle system from \emph{\cite[(3.1)]{ledger_sojmark_2018}}.
\end{thm}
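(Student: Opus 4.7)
The plan is to reduce (CMV) to an instance of (MV) by conditioning on the common noise $B^{0}$. For a fixed realisation $\beta$ of $B^{0}$, setting $Z^{\beta}_{t} := \sqrt{1-\rho^{2}}\, B_{t} + \rho\,\beta_{t}$, the conditional system becomes $X_{t} = X_{0} + Z^{\beta}_{t} - \alpha L^{\beta}_{t}$ with $L^{\beta}_{t} = \mathbb{P}(\tau \leq t \mid B^{0}=\beta)$, which is precisely an (MV) with $f(x)=x$ and driver $Z^{\beta}$. Because $B^{0}$ is independent of $(B,X_{0})$, for almost every $\beta$ the process $Z^{\beta}$ is continuous and independent of $X_{0}$, so hypothesis \eqref{hyp_H} is met. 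Pathwise solutions of this conditional (MV) then correspond exactly to the $B^{0}$-disintegrations of solutions to (CMV).

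Since $f(x)=x$ is globally $1$-Lipschitz, the smallness condition \eqref{alpha_sc} collapses to $\alpha\,\Vert V_{0}\Vert_{\infty} < 1$, which is precisely the weak feedback assumption of the theorem. Theorem \ref{Results_Thm_UniquenessSimple} therefore yields uniqueness of $L^{\beta}$ for a.e.~$\beta$ on every interval $[0,T]$, and hence uniqueness of solutions to (CMV). For existence, I would invoke the construction in \cite{ledger_sojmark_2018}, which produces a (possibly relaxed) solution to (CMV) as a subsequential limit of the empirical measures of the finite particle system. The density bound \eqref{density_bound} passes to such a limit point, and combined with \eqref{PJC} this forces $L$ to be continuous; the conditional uniqueness established in the previous step then upgrades the relaxed limit to a strict $B^{0}$-adapted solution of (CMV). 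Uniqueness also rules out all other subsequential limits, so the entire sequence of empirical measures converges, delivering the mean-field limit assertion.

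The main obstacle I anticipate is not the uniqueness statement itself, which is essentially a direct application of Theorem \ref{Results_Thm_UniquenessSimple}, but the compatibility between the pathwise uniqueness and the probabilistic formulation of (CMV). Concretely, one must check that the a.s.-unique family $\{L^{\beta}\}_{\beta}$ can be chosen jointly measurable in $\beta$, so that the candidate loss is genuinely $B^{0}$-adapted and is indeed a version of $\mathbb{P}(\tau \leq t \mid B^{0})$; this is handled by a standard regular conditional disintegration together with the continuity of $L^{\beta}$ in $t$. The weak feedback regime is what keeps this argument clean, because the density bound \eqref{density_bound} leaves no room for the blow-up ambiguity that would otherwise complicate the identification of relaxed limit points with strict solutions, and simultaneously makes the hypothesis of Theorem \ref{Results_Thm_UniquenessSimple} available realisation-by-realisation of $B^{0}$.
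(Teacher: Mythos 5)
Your uniqueness step is essentially the paper's: conditioning on the common noise turns \eqref{CMV} into an instance of \eqref{MV} with driver $Z^{\beta}_t=\sqrt{1-\rho^2}\,B_t+\rho\beta_t$, and since $f(x)=x$ is $1$-Lipschitz the condition \eqref{alpha_sc} reduces to $\alpha\Vert V_0\Vert_\infty<1$, so Theorem \ref{Results_Thm_UniquenessSimple} applies realisation by realisation. Note, however, that this only gives uniqueness among \emph{bonafide} solutions of \eqref{CMV}, i.e.\ those whose loss process is already $B^0$-measurable.

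The gap is in the existence/identification step. What \cite[Thm.~3.2]{ledger_sojmark_2018} provides is a \emph{relaxed} solution, whose loss is $\bar{L}_t=\mathbb{P}(\bar{\tau}\leq t\mid B^0,\bar{\mathbf{P}})$ for a random measure $\bar{\mathbf{P}}$ that is not a priori $B^0$-measurable. Your claim that ``the conditional uniqueness established in the previous step upgrades the relaxed limit to a strict $B^0$-adapted solution'' does not follow: that uniqueness only compares solutions conditioned on $B^0$ alone, so it says nothing about a limit point carrying extra randomness. The paper closes this by (i) proving pathwise uniqueness at the level of \emph{relaxed} solutions---coupling two of them through the same $(X_0,B,B^0)$ and conditioning on $(B^0,\bar{\mathbf{P}},\widetilde{\mathbf{P}})$, under which both losses are deterministic while $B$ remains a Brownian motion independent of $X_0$ thanks to the independence built into the relaxed formulation, so Theorem \ref{Results_Thm_UniquenessSimple} still applies---and then (ii) invoking a Yamada--Watanabe argument in the sense of Kurtz \cite{kurtz_ecp_2014} to produce a relaxed solution measurable with respect to $(X_0,B,B^0)$, after which the conditional independence of $(X_0,B,B^0)$ and $\mathbf{P}$ given $B^0$ yields $L_t=\mathbb{P}(\tau\leq t\mid B^0,\mathbf{P})=\mathbb{P}(\tau\leq t\mid B^0)$, i.e.\ a bonafide solution. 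The measurability issue you flag is exactly this point, and a ``standard regular conditional disintegration'' plus continuity in $t$ does not by itself deliver a measurable selection of the a.e.-unique conditional solutions; that is what the Yamada--Watanabe machinery is for. Once this upgrade is in place, identifying every particle-system limit point with the unique bonafide solution and deducing full convergence goes through as you describe; the continuity of $L$ via \eqref{density_bound} and \eqref{PJC} is a pleasant by-product but is not needed anywhere in the argument.
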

\begin{proof} By \cite[Theorem 3.2]{ledger_sojmark_2018} there exists a `relaxed' solution $(\bar{X},\bar{L},\bar{\mathbf{P}})$ to \eqref{CMV} given by
\[
\begin{cases}
\bar{X}_t = X_0 + \sqrt{1-\rho^2} B_t + \rho B^0_t - \alpha \bar{L}_t, \\
\bar{L}_t = \mathbb{P}(\bar{\tau} \leq t \, | \, B^0,\bar{\mathbf{P}}), \;\; \bar{\tau} = \inf\{ t \geq 0 : \bar{X}_t \leq 0 \},  \\
\bar{\mathbf{P}} =\text{Law}(X \,|\,B^0,\bar{\mathbf{P}}), \;\; (B^0,\bar{\mathbf{P}}) \perp B,
\end{cases}
\]	
for a 2d Brownian motion $(B,B^0)$ and initial condition $X_0\perp (B,B^0,\bar{\mathbf{P}})$, where we note that $\bar{\mathbf{P}}$ is a random probability measure on the space of c\`adl\`ag paths. Let $(\widetilde{X},\widetilde{L}, \widetilde{\mathbf{P}})$ be another relaxed solution coupled to $(\bar{X},\bar{L}, \bar{\mathbf{P}})$ through the same Brownian drivers $(B,B^0)$  and the same random start point $X_0$.
When comparing $\widetilde{L}$ and $\bar{L}$, the conditioning fixes a pathwise realisation of $B^0$, $\bar{L}$, and $\widetilde{L}$, so we can apply Theorem \ref{Results_Thm_UniquenessSimple} with $Z_t=\sqrt{1-\rho^2} B_t + \rho \beta_t$ for a given realisation $B^0=\beta$.
This proves the pathwise equality $\widetilde{L}=\bar{L}$, and hence also $\widetilde{X}=\bar{X}$ and $\widetilde{\mathbf{P}}=\bar{\mathbf{P}}$. From here, a Yamada--Watanabe argument (see \cite{kurtz_ecp_2014}) gives existence of a relaxed solution $(X,L,\mathbf{P})$ that is $(X_0,B,B^0)$-measurable. But the definition of a relaxed solution entails that $(X_0,B,B^0)$ is conditionally independent of $\mathbf{P}$ given $B^0$, so we get $L_t=\mathbb{P}(t\geq\tau\,|\,B^0,\mathbf{P})=\mathbb{P}(t\geq\tau\,|\,B^0)$, and hence $(X,L)$ is a bonafide solution to \eqref{CMV}.
 Finally, \cite[Thm.~3.2]{ledger_sojmark_2018} shows that the limit points of the particle system \cite[(3.1)]{ledger_sojmark_2018} are supported on relaxed solutions to \eqref{CMV}, but these must now agree with the unique bonafide solution $(X,L)$. Hence there is full weak convergence of the particle system to this unique limit.
\end{proof}

 To compare with the PDE point of view in Section \ref{subsec:PJC}, let $V_t$ be the random density function of $\nu_t=\mathbb{P}(X_t\in\!\cdot\,,t<\tau \, | \, B^0)$. Then the system (\ref{CMV}) gives rise to a stochastic PDE
\[
dV_t(x) = \tfrac{1}{2} \partial_{xx}V_t(x) dt + \alpha \partial_{x}V_t(x) dL_{t} - \rho \partial_{x}V_t(x) dB_{t}^{0}, \quad V_t(0)=0,
\]
for $(x,t)\in(0,\infty)\times[0,\infty)$, where $L_t=1-\int_0^\infty V_t(x)dx$.  One can formally integrate by parts to find that $L^\prime_t=\frac{1}{2}\partial_xV_t(0)$, but unless $\rho=0$ we can no longer expect $L$ to be differentiable and the derivative of $V_t(\cdot)$ at zero fails to be defined.

As a final application of Theorem \ref{Results_Thm_UniquenessSimple}, we present a local uniqueness result for \eqref{MV} with $f(x)=-\log(1-x)$, as studied in \cite{nadtochiy_shkolnikov_2017, nadtochiy_shkolnikov_2018}, for a general continuous driver $Z$.

\begin{cor}
	\label{Results_Thm_Z}
	Consider \eqref{MV} with $f(x)=-\log(1-x)$ and $\alpha>0$, for any continuous driver $Z$, and suppose $\Vert V_0 \Vert_\infty<\alpha^{-1}$. Then there is uniqueness of solutions to \eqref{MV} on $[0,t]$ for all $t\geq0$ such that $L_t<1-\alpha\Vert V_0 \Vert_\infty$.
\end{cor}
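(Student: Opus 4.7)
My plan is to reduce the corollary to a bootstrap application of Theorem \ref{Results_Thm_UniquenessSimple}. Since $f(x) = -\log(1-x)$ has $f'(x) = (1-x)^{-1}$, a direct computation gives
\[
\Vert f \Vert_{\mathrm{Lip}(K)} = (1-K)^{-1} \qquad \text{for every } K \in [0,1),
\]
so the smallness condition \eqref{alpha_sc} becomes exactly $L_T \vee \bar{L}_T < 1 - \alpha \Vert V_0 \Vert_\infty$. Consequently, whenever both candidate solutions stay strictly below this critical level up to some time $T$, Theorem \ref{Results_Thm_UniquenessSimple} immediately delivers $L = \bar{L}$ on $[0,T]$, and the only real task is to bootstrap from this conditional statement to the hypothesis of the corollary, in which only the reference solution $L$ is assumed to satisfy the bound.

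Given two solutions $L$ and $\bar{L}$, I would set
\[
T^* := \sup \{ s \in [0,t] : L_r = \bar{L}_r \text{ for all } r \in [0,s] \}
\]
and argue by contradiction that $T^* = t$. The first step is to verify that the agreement propagates to the closed interval $[0, T^*]$. Since $L = \bar{L}$ on $[0, T^*)$, the driving identity $X_s = X_0 + Z_s - \alpha f(L_s)$ forces $X = \bar{X}$ on $[0, T^*)$, so $\nu_{T^*-} = \bar{\nu}_{T^*-}$ and $L_{T^*-} = \bar{L}_{T^*-}$. The physical jump condition \eqref{pjc_general_f} then expresses $\Delta L_{T^*}$ and $\Delta \bar{L}_{T^*}$ as the same functional of this shared left-limit data, giving $L_{T^*} = \bar{L}_{T^*}$. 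Combined with the monotonicity of $L$ and the hypothesis $L_t < 1 - \alpha \Vert V_0 \Vert_\infty$, this also yields $\bar{L}_{T^*} = L_{T^*} \leq L_t < 1 - \alpha \Vert V_0 \Vert_\infty$.

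Now assume $T^* < t$ for contradiction. The right-continuity of both c\`adl\`ag solutions at $T^*$ supplies some $\delta \in (0, t - T^*]$ with $L_{T^*+\delta} \vee \bar{L}_{T^*+\delta} < 1 - \alpha \Vert V_0 \Vert_\infty$, and applying Theorem \ref{Results_Thm_UniquenessSimple} on $[0, T^* + \delta]$ then yields $L = \bar{L}$ there, contradicting the maximality of $T^*$. The only delicate step is identifying the jumps at $T^*$ via the physical jump condition: without it, two c\`adl\`ag solutions agreeing on $[0, T^*)$ could in principle take different values at $T^*$, so the bootstrap relies essentially on the solutions being interpreted in the physical sense introduced in Section \ref{subsec:PJC}.
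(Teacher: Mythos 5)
Your opening reduction is precisely the paper's proof: the published argument consists of the single observation that $\Vert f \Vert_{\mathrm{Lip}(x)}=(1-x)^{-1}$, so that the smallness condition \eqref{alpha_sc} becomes $L_t\vee\bar{L}_t<1-\alpha\Vert V_0\Vert_\infty$ and Theorem \ref{Results_Thm_UniquenessSimple} applies directly; the corollary is to be read in this symmetric way (uniqueness among solutions that remain below the critical level up to time $t$), and the paper stops there.

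The additional bootstrap you propose, aimed at the asymmetric reading in which only the reference solution is assumed to satisfy $L_t<1-\alpha\Vert V_0\Vert_\infty$, has a genuine gap relative to the stated hypotheses: the step identifying the jumps of the two solutions at $T^*$ invokes the physical jump condition \eqref{pjc_general_f}, which is not part of the solution concept here. In Section 2 a solution of \eqref{MV} for a general continuous driver $Z$ is merely an increasing c\`adl\`ag $L$ with $L_0=0$; the condition \eqref{PJC} is justified and used only in the Brownian setting of Section 3, and Section \ref{subsec:PJC} even exhibits c\`adl\`ag solutions for general $Z$ that violate the physical selection. Without \eqref{pjc_general_f}, two solutions agreeing on $[0,T^*)$ need not agree at $T^*$: the jump size is constrained only by the self-consistency relation $x=\nu_{T^*-}\bigl(\bigl[0,\alpha(f(L_{T^*-}+x)-f(L_{T^*-}))\bigr]\bigr)$, which may admit several roots---this multiplicity is exactly the ambiguity \eqref{PJC} was introduced to remove. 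Nor can a large root be excluded by a smallness argument in this example: even granting that $\nu_{T^*-}$ has density at most $\Vert V_0\Vert_\infty$, the relation only gives $x\le\alpha\Vert V_0\Vert_\infty\,(f(L_{T^*-}+x)-f(L_{T^*-}))$, and for $f(x)=-\log(1-x)$ the right-hand side blows up as $x\uparrow 1-L_{T^*-}$, so a self-consistent jump of $\bar{L}$ over the threshold is not ruled out. In short: under the paper's reading your first paragraph is already the complete proof and the bootstrap is unnecessary; under your stronger reading the argument only goes through if you add the standing assumption that solutions are physical, which is not among the corollary's hypotheses.
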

\begin{proof}
	Note that $\Vert f \Vert_{\mathrm{Lip}(x)}=(1-x)^{-1}$, so the result follows from Theorem \ref{Results_Thm_UniquenessSimple}.
	\end{proof}

We should note that Theorem \ref{Results_Thm_UniquenessSimple}, and the two above applications, represent the most straightforward utilization of the ideas from Lemma \ref{Lem_Unique_Comparison}. Indeed, it is possible to obtain stronger results with harder work and a more specific setting. The next section gives one such example for the Brownian case.


\section{Local uniqueness after a blow-up}


For the remaining part of the paper, we return to the setting $Z_t=B_t$ and $f(x)=x$, where $B$ is a standard Brownian motion, and we focus on the case $\alpha>0$, for which blow-ups may occur. As highlighted in Section \ref{subsec:recent_history}, there is full uniqueness of (\ref{MV}) up to an explosion time $t_\star>0$, and we know that there exist solutions for all time (for a natural class of initial conditions); however, the results in this section are the first to address uniqueness of the restarted system after a blow-up.

The earlier approaches to uniqueness (see Section \ref{subsec:recent_history}) break down at the first explosion time $t_\star$, since the system may be restarting from a density $V_{t_\star}$ which is not sufficiently well-behaved at the origin. In principle, all we know is that (\ref{PJC}) imposes
\begin{equation}
\label{eq:global_I}
\inf\{x>0:\textstyle{\int}_0^{\alpha x} V_{t_\star}(y)dy<x\}=0.
\end{equation}
The problem here is that (\ref{eq:global_I}) implies little about the  regularity of $V_{t_\star}$ near zero. Without further information, we cannot rule out pathological cases like those in Figure \ref{fig_global_badguys}, so it seems difficult to gain sufficient control to prove that uniqueness can always be propagated. In practice, however, we do not expect these edge cases to arise, and indeed we are able to prove here that we have at least polynomial control on the density after a blow-up  (defined as a jump time of $L$). This observation is derived from an analyticity result for the left limit density $V_{t_\star-}$ (Proposition \ref{prop:analyticity}) and it allows us to prove short-time uniqueness after a blow-up (Theorem \ref{thm_global_uniqafterblowup}), by applying Theorem \ref{Intro_Prop_ShortTimeUniqueness} which we present next.

\subsection{Short-time uniqueness for general feedback}

As the next result shows, polynomial control on the initial density near the origin is sufficient to have short-time uniqueness. The idea of the proof is to use the methods from the previous section, but to ensure that \emph{insufficient} time has passed for a large amount of mass to reach the boundary, thus counteracting the effect of the density possibly being above $\alpha^{-1}$ away from the origin as well as allowing $V_{0}(0+)=\alpha^{-1}$.

\begin{thm}[Short-time uniqueness]
\label{Intro_Prop_ShortTimeUniqueness} Suppose the intial condition $\nu_0$ has a density, $V_0$, for which there exists $c>0$, $x_0>0$ and $n \in \mathbb{N}$ such that
\begin{equation}
\label{eq:global_IC}
V_0(x) \leq \alpha^{-1} - cx^n ,
	\qquad \textrm{ for all } x < x_0. 
\end{equation}
Let $L$ and $\bar{L}$ be two solution to \eqref{MV} with $Z_t=B_t$ and $f(x)=x$, where $B$ is a Brownian motion. Then there exists $t_0 > 0$ such that $L_t = \bar{L}_t$ for all $t \in [0,t_0]$.
\end{thm}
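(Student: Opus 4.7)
The plan is to rerun the fixed-point argument of Theorem \ref{Results_Thm_UniquenessSimple} for $Z_t=B_t$ and $f(x)=x$, but to extract a genuine strict contraction from the polynomial gap \eqref{eq:global_IC} by working on a short enough time window that the Brownian driver keeps the running supremum $Z^*_r := \sup_{s\leq r}\{\alpha L_s - B_s\}$ concentrated in the region $[0, x_0)$ where $V_0<\alpha^{-1}$ strictly. First, I would observe that \eqref{eq:global_IC} implies $\int_0^{\alpha x}V_0(y)\,dy \leq x - \tfrac{c\alpha^{n+1}}{n+1}x^{n+1} < x$ for small $x>0$, so by the physical jump condition \eqref{PJC} both $L$ and $\bar{L}$ satisfy $\Delta L_0 = 0$; combined with right-continuity, this gives $L_s, \bar{L}_s \to 0$ as $s\to 0^+$, a fact I will need to control the running suprema for small $r$.

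Setting $\epsilon_r := \|L-\bar{L}\|_r$ and writing $\bar{Z}^*_r$ for the analogue of $Z^*_r$ for $\bar{L}$, Lemma \ref{Lem_Unique_Comparison} together with the computation in the proof of Theorem \ref{Results_Thm_UniquenessSimple} (using the unit Lipschitz constant of $f(x)=x$) gives
\[
|L_r - \bar{L}_r| \leq \mathbb{E}\Bigl[\int_{(Z^*_r - \alpha\epsilon_r)_+}^{Z^*_r}\!\!V_0(x)\,dx\Bigr] \vee \mathbb{E}\Bigl[\int_{(\bar{Z}^*_r - \alpha\epsilon_r)_+}^{\bar{Z}^*_r}\!\!V_0(x)\,dx\Bigr].
\]
I would then split each expectation according to whether the upper endpoint lies below $x_0$ or not. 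On the good event, \eqref{eq:global_IC} upgrades the integrand bound to $\epsilon_r - \tfrac{c}{n+1}\bigl[(Z^*_r)^{n+1} - ((Z^*_r-\alpha\epsilon_r)_+)^{n+1}\bigr]$, providing the essential strict gain; on the complement I would use the crude bound $\int V_0(x)\,dx \leq 1$ and estimate $\mathbb{P}(Z^*_r > x_0) \leq \mathbb{P}(\sup_{s\leq r}(-B_s) > x_0/2)$ (since $\alpha L_r \leq x_0/2$ for $r$ small by the first paragraph), which decays super-exponentially in $1/r$ by standard Gaussian tail bounds.

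The final step is to assemble these pieces into an inequality of the form $\epsilon_r \leq \epsilon_r - G(r,\epsilon_r) + T(r)$ and use a quantitative lower bound on the gain term derived from the reflection-principle comparison $Z^*_r \geq \sup_{s\leq r}(-B_s) \stackrel{d}{=} |B_r|$ --- producing, schematically, $G(r,\epsilon_r) \gtrsim \epsilon_r \cdot r^{n/2}$ --- against the super-exponentially small tail $T(r) \lesssim e^{-c/r}$. For $t_0$ sufficiently small this forces $\epsilon_r = 0$ on $[0,t_0]$. The main obstacle is exactly this quantitative lower bound on $G$: when $Z^*_r$ is of the same scale as $\alpha\epsilon_r$, the polynomial factor $x^n$ collapses and the gain in \eqref{eq:global_IC} degrades from order $\epsilon_r$ to order $\epsilon_r^{n+1}$, so a careful Fubini-based decomposition --- separating the regimes $Z^*_r \leq \alpha\epsilon_r$ and $Z^*_r > \alpha\epsilon_r$ and using the pointwise comparison $Z^*_r \geq \sup_{s\leq r}(-B_s)$ --- is needed to show that the low-$Z^*_r$ regime contributes only a subdominant correction, leaving the genuine $r^{n/2}$-scale gain as the dominant term.
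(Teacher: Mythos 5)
Your overall strategy is the paper's: couple the two solutions through the comparison argument of Lemma \ref{Lem_Unique_Comparison}, split according to where the running supremum $Z^*_r$ lies, extract a polynomial gain of order $r^{n/2}$-ish from \eqref{eq:global_IC} on the near region, and beat the far region with a Gaussian tail, using right-continuity to keep $\alpha L_r,\alpha\bar L_r$ small. However, there is a genuine gap in how you treat the bad event $\{Z^*_r\geq x_0\}$: bounding $\int_{(Z^*_r-\alpha\epsilon_r)_+}^{Z^*_r}V_0(x)\,dx$ by the total mass $1$ produces an \emph{additive} error $T(r)=\mathbb{P}(Z^*_r\geq x_0)$ that is not proportional to $\epsilon_r=\Vert L-\bar L\Vert_r$. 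The assembled inequality $\epsilon_r\leq \epsilon_r-G(r,\epsilon_r)+T(r)$ then only yields $G(r,\epsilon_r)\leq T(r)$, i.e.\ a quantitative smallness estimate of the type $\epsilon_r\lesssim r^{-n/2}e^{-c/r}$, and no choice of $t_0$ (nor any iteration of the same bound) upgrades this to $\epsilon_r=0$; so the asserted final step ``this forces $\epsilon_r=0$'' does not follow. The fix is to keep every contribution multiplicative in $\epsilon_r$: on $\{Z^*_r\geq x_0\}$ bound the integral by $\Vert V_0\Vert_\infty\,\alpha\epsilon_r$ (boundedness of $V_0$ is what the paper uses, and it is available in the intended application after a blow-up by Proposition \ref{prop:analyticity}), so that the estimate reads $|L_r-\bar L_r|\leq \alpha E(r)\,\epsilon_r$ with $E(r)\leq \alpha^{-1}-c\,r^n\,\mathbb{P}\bigl(Z^*_r\in[\alpha\epsilon_r+r,x_0]\bigr)+\Vert V_0\Vert_\infty\,\mathbb{P}(Z^*_r\geq x_0)$; the polynomial gain dominates the Gaussian tail for small $r$, giving the strict inequality $\alpha E(r)<1$ with no additive slack.

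Two further points. First, the ``low-$Z^*_r$ degradation'' you single out as the main obstacle is sidestepped in the paper by a cleaner three-region split: on the middle region $Z^*_r\in[\alpha\epsilon_r+r,x_0)$ the whole integration window lies in $[r,x_0)$, so the supremum of $V_0$ there is at most $\alpha^{-1}-c\,r^n$, while on the low region $Z^*_r<\alpha\epsilon_r+r$ one simply uses $V_0\leq\alpha^{-1}$ (no gain claimed, none needed); no Fubini decomposition is required. Second, even after repairing the additive term you still need to pass from the pointwise bound on $|L_r-\bar L_r|$ to $\Vert L-\bar L\Vert_{t_0}=0$: since the contraction factor is strict only for $r>0$, the paper does this by taking $t_0$ below the first jump times, using continuity to find $s_0$ where the supremum is attained, and deriving a contradiction at $r=s_0$ unless the supremum vanishes; your write-up silently replaces $|L_r-\bar L_r|$ by $\epsilon_r$ on the left-hand side, and this step should be made explicit.
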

\begin{proof}
	For shorthand let $Z^*_t := \sup_{s \leq t} \{ \alpha L_s - B_s \}$ and note that 
	\begin{equation}\label{eq:short_unique_polyno}
	\mathbb{E}\bigl[ \nu_0([-\alpha \Vert L-\bar{L} \Vert_t + Z^*_t, Z^*_t)) \bigr]
	\leq E(t) \cdot \alpha \Vert L - \bar{L} \Vert_t,
	\end{equation}
	where
	\[
	E(t) := \mathbb{E}\Bigl[ \, \sup \bigl\{V_0(x)  :x \in [-\alpha \Vert L - \bar{L} \Vert_t + Z^*_t, Z^*_t) \bigr\} \Bigr].
	\]
	We decompose this latter expectation into the three regions:
	\[
	Z^*_t \in [0, \alpha \Vert L - \bar{L} \Vert_t + t),
	\qquad Z^*_t \in [\alpha \Vert L - \bar{L} \Vert_t + t, x_0),
	\qquad Z^*_t \in [x_0,\infty).
	\]
	Since $L$ and $\bar{L}$ are c\`adl\`ag, we can take $t_1 > 0$ sufficiently small so that for all $t \leq t_1$ we have $\alpha \Vert L - \bar{L} \Vert_t + t \leq x_0/2$ and $\alpha L_t, \alpha \bar{L}_t \leq x_0/4$. Therefore
	\begin{align*}
	E(t) 
	&\leq \alpha^{-1} \mathbb{P}(Z^*_t \in [0,\alpha \Vert L - \bar{L} \Vert_t + t]) 
	+ (\alpha^{-1} - c t^n ) \mathbb{P}(Z^*_t \in [\alpha \Vert L - \bar{L} \Vert_t + t, x_0]) \\
	&\qquad + \Vert V_0 \Vert_\infty \mathbb{P}(Z^*_t \in [x_0, \infty)) \\
	&\leq  \alpha^{-1} - c t^n \mathbb{P}(Z_t \in [x_0/2, x_0])  + \Vert V_0 \Vert_\infty \mathbb{P}(Z^*_t \in [x_0, \infty)) \\
	&\leq  \alpha^{-1} - c t^n \mathbb{P}( \sup_{s \leq t} B_s \in [\tfrac{1}{2} x_0, \tfrac{3}{4} x_0])  + \Vert V_0 \Vert_\infty \mathbb{P}( \sup_{s \leq t} B_s \in [\tfrac{3}{4} x_0, \infty)) \\
	&= \alpha^{-1} - ct^n ( \Phi(-x_0/2t^{1/2}) - \Phi(-3x_0/4t^{1/2}) ) + \Vert V_0 \Vert_\infty \Phi(-3x_0/4t^{1/2}),
	\end{align*}
	where $\Phi$ is the standard normal cdf. Using the asymptotic bounds
	\[
	\Phi(-cx) \asymp x^{-1} e^{-c^2 x^2 / 2},
	\qquad \textrm{as } x \to \infty, 
	\] 
	we can find $t_2 \in (0, t_1]$ sufficiently small so that $E(t) < \alpha^{-1}$ for all $t \leq t_2$. Therefore, recalling \eqref{eq:short_unique_polyno}, it follows by symmetry and Lemma \ref{Lem_Unique_Comparison} that
	\begin{equation}
	\label{eq:Unique_MainProof_I}
	|L_t - \bar{L}_t| \leq (1-\varepsilon) \Vert L - \bar{L} \Vert_t,
	\qquad \textrm{for every } t \in (0,t_2],
	\end{equation}
	for some $\varepsilon\in(0,1]$. Now let $t_0 > 0$ be the smaller of the first jump times of $L$ and $\bar{L}$ and $t_2$. By continuity there exists $s_0 \in [0,t_0]$ for which the supremum on $[0,t_0]$ is attained. That is,
	\[
	 \Vert L - \bar{L} \Vert_{t_0} = \Vert L - \bar{L} \Vert_{s_0} = |L_{s_0} - \bar{L}_{s_0}|.
	\] 
	If $s_0 = 0$, then $\Vert L - \bar{L} \Vert_{t_0} = |L_0-\bar{L}_0|= 0$ and we are done. Otherwise, $s_0 \in (0,t_0]$, but this contradicts (\ref{eq:Unique_MainProof_I}) at $t = s_0$ unless $ \Vert L - \bar{L} \Vert_{s_0}  =0$, so the proof is complete.
\end{proof}

\begin{figure}
	\begin{center}
		\includegraphics[width=0.5 \textwidth]{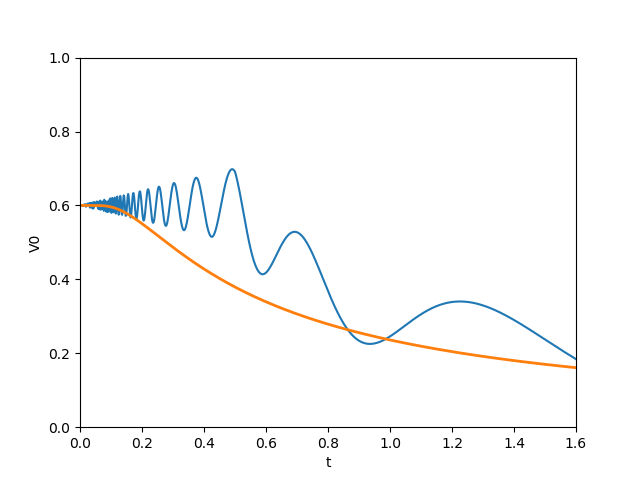} 
		\caption{\label{fig_global_badguys} Two pathological initial densities that satisfy (\ref{eq:global_I}) with $\alpha = 5/3$. The blue curve takes infinitely many values above the critical value of $3/5$ near zero. The red curve remains strictly below $3/5$, but all derivatives vanish at the origin, so control of the form in (\ref{eq:global_IC}) is not possible.  }
	\end{center}
\end{figure} 

It is worth emphasising that the main difficulty in the above proof is concentrated at the initial time $t=0$, where we are faced with $V_0(0+)=\alpha^{-1}$. Indeed, for any small enough $t>0$, the diffusivity (and the c\`adl\`agness of $L$) forces the density $V_t$ strictly below $\alpha^{-1}$ in a small neighbourhood of $x=0$ (see \cite[Prop.~6.4.3]{sojmark_2019}), meaning that we are essentially back in the weak feedback regime of Theorem \ref{Results_Thm_UniquenessSimple}, albeit in a local sense near the origin. By similar observations, one can use Theorem \ref{Intro_Prop_ShortTimeUniqueness} to get global uniqueness for densities that look like (\ref{eq:global_IC}) near the origin but lie above $\alpha^{-1}$ sufficiently far out so that the diffusivity forces it below $\alpha^{-1}$ before it can cause a blow-up.

\subsection{Short-time uniqueness after a blow-up}

In what follows we will show how to propagate the conclusion of Theorem \ref{Intro_Prop_ShortTimeUniqueness} to blow-up times. The main point is that, although we cannot control the solution density near zero at arbitrary times, we can prove that the density is analytic in the interior of the half-line. At a blow-up time this is then sufficient to give control near zero of the new density from which the system restarts, since the new point at the origin was in the interior of the density before the jump discontinuity (see the proof of Theorem \ref{thm_global_uniqafterblowup} below). Our proof of analyticity relies on kernel smoothing and energy estimate techniques as used in \cite{hambly_ledger_2017, hambly_sojmark_2018}, but we only state the result here and postpone the proof to the next section.

\begin{prop}[Interior analyticity]
\label{prop:analyticity}
Suppose the initial condition $\nu_0$ has a density $V_0 \in L^2(0,\infty)$ and assume there is a solution to \eqref{MV} with $Z_t=B_t$ and $f(x)=x$, for which we define $\nu_t:=\mathbb{P}(X_t\in \cdot \,, t<\tau)$. Then, for all $t>0$, $\nu_t$ has a bounded density $V_t$ and $y \mapsto V_{t-}(y)$ is analytic at every point $x\in(0,\infty)$. 
\end{prop}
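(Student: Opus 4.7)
Two statements must be proved: boundedness of $V_t$ for every $t > 0$, and real-analyticity of $y \mapsto V_{t-}(y)$ at each $x_0 \in (0, \infty)$.

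Boundedness is immediate from heat kernel smoothing. Since $L$ is deterministic in this setting (no common noise), the unkilled process $Y_t = X_0 + B_t - \alpha L_t$ has density $\rho_t(y) = (p_t \ast V_0)(y + \alpha L_t)$, where $p_t$ denotes the standard Gaussian kernel. Cauchy--Schwarz yields $\|\rho_t\|_\infty \leq \|p_t\|_{L^2}\|V_0\|_{L^2} = (4\pi t)^{-1/4}\|V_0\|_{L^2}$, which is finite for $t > 0$. Killing only removes mass, so both $V_t$ and its left limit $V_{t-}$ are pointwise bounded by $\rho_t$.

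For analyticity, my first attempt would be a direct representation argument: express $V_{t-}(y)$ near $x_0$ as a Gaussian convolution of $V_{t-h}$ plus an exponentially small analytic correction. Fix $x_0 > 0$ and $t > 0$, and by c\`adl\`agness of $L$ pick $h \in (0, t)$ with $\alpha(L_{t-} - L_{t-h}) < x_0/8$. For $y$ in a complex neighborhood of $x_0$, write
$$V_{t-}(y) = \int_0^\infty p_h\bigl(y - z + \alpha(L_{t-}-L_{t-h})\bigr) V_{t-h}(z)\, dz - R(y),$$
where $R(y)$ accounts for paths killed in $(t-h, t)$. The main term is entire in $y$, since $p_h$ is entire in the spatial variable and $V_{t-h} \in L^1 \cap L^\infty$. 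For the correction, paths starting away from zero (say $z \geq x_0/8$) must dip by at least $x_0/8$ to be killed, contributing $O(e^{-c x_0^2/h})$; paths starting near zero (say $z < x_0/8$) must travel a distance $\geq 3x_0/4$ to reach $y$, contributing the same order. A reflection-principle-type representation would then show that $R$ itself extends analytically to a complex strip around $x_0$, giving real-analyticity of $V_{t-}$ at $x_0$.

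The main obstacle to making this direct argument rigorous is the possibly countably many jumps of $L$ in $[t-h, t)$, combined with the fact that the continuous part of $-\alpha L_s$ is only monotone, so the classical reflection principle for Brownian motion with constant drift is unavailable. The route advertised by the authors via kernel smoothing and energy estimates (as in Hambly--Sojmark 2018) sidesteps this by introducing a smooth cutoff $\chi \in C_c^\infty((0,\infty))$ equal to $1$ near $x_0$ and deriving $L^2$ energy estimates for $\chi V_s$ and its spatial derivatives. Iterating over the derivative order with careful combinatorial bookkeeping yields factorial bounds of the form $\|\chi \partial_y^k V_{t-}\|_{L^2} \leq C k!/r^k$ for some $r > 0$, which by Sobolev embedding give analyticity of $V_{t-}$ at $x_0$. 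The energy estimate absorbs both the monotonicity of $L$ and the jump structure into a single $L^2$ bound, which is why it is cleaner than the direct decomposition above.
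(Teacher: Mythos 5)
Your boundedness argument is fine (it is essentially the convolution/Cauchy--Schwarz computation behind Lemma \ref{lem:L2_initial_bound}; the paper itself simply cites \cite[Prop.~2.1]{hambly_ledger_sojmark_2018} for the bounded density), but the analyticity claim --- the actual content of the proposition --- is not proved. Your direct route stops exactly at the crux: exponential \emph{smallness} of the killed-path correction $R(y)$ is irrelevant for analyticity, and the ``reflection-principle-type representation'' that would make $R$ analytic is precisely what you concede is unavailable once $-\alpha L$ is a general nondecreasing c\`adl\`ag drift. (A possible repair is a strong Markov/first-passage decomposition, $R(y)=\mathbb{E}\bigl[\mathbf{1}_{\{\tau\in(t-h,t)\}}\, p_{t-\tau}\bigl(y-X_{\tau}+\alpha(L_{t-}-L_{\tau})\bigr)\bigr]$, an average of entire functions of $y$ with locally uniform bounds on a complex strip about $x_0$, because $X_\tau\le 0$ keeps the real part of the argument bounded away from zero even as $t-\tau\downarrow0$; analyticity would then follow by dominated convergence and Morera. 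But you neither write this down nor address the degenerating variance, so as it stands the first route is a sketch, not a proof.)

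Your fallback paragraph then restates the paper's strategy without its substance. The paper's proof is carried by the quantitative form of the smoothed energy estimate (Proposition \ref{Analytic_Lem_Inductive_I}): mollify with the absorbing kernel $T_\delta$, derive the weak PDE for $\nu_t$ via It\^o on $[0,t_\star)$, test against powers $\zeta^a$ of a cutoff \emph{and} time weights $t^b$, show the reflecting-kernel remainder $R_\delta\nu_t$ is $O(e^{-w^2/2\delta})$ on the support of $\zeta$, pass to the limit $\delta\to0$ through Lemma \ref{Kernel_Lem_MainLemma} (which is also what legitimises the existence of the weak derivatives, via Corollary \ref{Analytic_Cor_Smoothness}), and then run the induction $I(n+1,a,b)\le (c_1 t_\star a(a-1)+b)\,I(n,a-2,b-1)$ with $a=2n$, $b=n$ to obtain $\Vert\partial_x^n V_{t_\star-}\Vert_{L^2(U)}\le C^n\, n!$, whence pointwise factorial bounds and analyticity by Morrey's inequality. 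The specific choice of weights $(\zeta^a,t^b)$ and the resulting factorial bookkeeping \emph{is} the proof; asserting that ``iterating with careful combinatorial bookkeeping yields factorial bounds'' leaves precisely that gap open. So the proposal identifies the right target ($n!$-growth of local $L^2$ norms plus Sobolev embedding) but supplies neither of the two possible arguments for it.
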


Exploiting the interior analyticity, we are now in a position to prove  short-time uniqueness for the restarted system after a blow-up time.

\begin{thm}[Short-time uniqueness after blow-up]
\label{thm_global_uniqafterblowup}
Let the initial condition $\nu_0$ have a density $V_0 \in L^2(0,\infty)$ and suppose we have a solution $L$ to (\ref{MV}) with $Z_t=B_t$ and $f(x)=x$ up to its first blow-up time $t_\star>0$, where $B$ is a Brownian motion. Then the system can be restarted at time $t_\star$ and the restarted solution is unique (in the class of c\`adl\`ag solutions) on a small time interval $[t_\star,t_{\star\star}]$, for some $t_{\star\star}>t_\star$.
\end{thm}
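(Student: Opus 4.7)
The goal is to reduce to Theorem \ref{Intro_Prop_ShortTimeUniqueness} applied to the system restarted at time $t_\star$, with driver $\tilde B_s := B_{t_\star+s} - B_{t_\star}$ (a Brownian motion by the strong Markov property) and initial density $V_{t_\star}$. Since any two c\`adl\`ag extensions of $L$ agree on $[0,t_\star)$ by the earlier uniqueness results and, by \eqref{PJC}, share the same jump at $t_\star$ (whose size is determined by $\nu_{t_\star-}$), they necessarily restart from the \emph{same} density $V_{t_\star}$. Thus it suffices to prove short-time uniqueness for the restarted \eqref{MV} problem, and for that, by Theorem \ref{Intro_Prop_ShortTimeUniqueness}, I only need to verify the polynomial bound (\ref{eq:global_IC}) for $V_{t_\star}$ near the origin.

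\textbf{From interior analyticity to the polynomial bound.} The jump at $t_\star$ shifts the surviving mass across the absorbing boundary, so
\[
V_{t_\star}(y) = V_{t_\star-}(y + \alpha\Delta L_{t_\star}), \qquad y>0.
\]
Since $\alpha\Delta L_{t_\star} > 0$ is an interior point of $(0,\infty)$, Proposition \ref{prop:analyticity} yields analyticity of $V_{t_\star-}$ in a two-sided neighborhood there, and after translation $V_{t_\star}$ extends to an analytic function on some $(-\delta, \delta)$ with Taylor expansion $V_{t_\star}(y) = \sum_{k\geq 0} a_k y^k$. The c\`adl\`ag property of the restarted solution rules out a further jump at $t_\star^+$, which via \eqref{PJC} at $t_\star$ forces \eqref{intro_restrict_intial} at $V_{t_\star}$, namely $\int_0^{\alpha x} V_{t_\star}(y)\,dy < x$ for arbitrarily small $x>0$. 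Integrating the Taylor series term by term, this becomes
\[
(a_0\alpha - 1)x + \sum_{k\geq 1} a_k\,\frac{(\alpha x)^{k+1}}{k+1} < 0 \quad \text{for all small } x>0,
\]
which forces either $a_0 < \alpha^{-1}$ (immediately giving (\ref{eq:global_IC}) with $n=1$ and a small $c$), or else $a_0 = \alpha^{-1}$ together with the first nonzero higher coefficient $a_{k^*}$ ($k^*\geq 1$) being strictly negative (giving (\ref{eq:global_IC}) with $n = k^*$ and $c = |a_{k^*}|/2$). The degenerate subcase $a_0 = \alpha^{-1}$ with all $a_k = 0$ for $k\geq 1$ is excluded by analyticity: it would force $V_{t_\star}\equiv\alpha^{-1}$ on the entire connected domain of analyticity $(0,\infty)$ via the identity principle, contradicting $V_{t_\star}\in L^1$. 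Theorem \ref{Intro_Prop_ShortTimeUniqueness} applied to the restarted problem then delivers the required $t_{\star\star}>t_\star$.

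\textbf{Main obstacle.} The delicate step is the critical case $V_{t_\star}(0+) = \alpha^{-1}$: mere smoothness is not enough, since a $C^\infty$ density could be flat to infinite order at $0$ (cf.\ the red curve in Figure \ref{fig_global_badguys}), and then (\ref{eq:global_IC}) would fail for every $n$. The strength of Proposition \ref{prop:analyticity} is precisely that it upgrades smoothness to analyticity, tying the local Taylor coefficients at the new boundary point to the global behaviour of the density and thereby excluding such pathological configurations.
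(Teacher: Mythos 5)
Your proposal is correct and follows essentially the same route as the paper: shift by $\alpha\Delta L_{t_\star}>0$ so that Proposition \ref{prop:analyticity} gives analyticity of $V_{t_\star}$ at the origin, expand in a Taylor series, use the physical jump condition (via \eqref{eq:global_I}) to exclude $V_{t_\star}(0+)>\alpha^{-1}$ and a positive leading coefficient in the critical case $V_{t_\star}(0+)=\alpha^{-1}$, and then invoke Theorem \ref{Intro_Prop_ShortTimeUniqueness}. The only deviations are cosmetic --- you work with the integrated form of \eqref{eq:global_I} and dispose of the all-coefficients-zero subcase by the identity principle and integrability, where the paper argues pointwise against \eqref{eq:global_I} --- and your explicit remarks on the restarted Brownian driver and the common restart density just make precise what the paper leaves implicit.
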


\begin{proof}
	Note that, at the first blow-up time $t_\star$, we have
	\[
	\nu_{t_\star}(S)=\mathbb{P}(X_{t_\star-}-\alpha \Delta L_{t_\star} \in S) = \nu_{t_\star-}(S+\alpha \Delta L_{t_\star})= \int_{S} V_{t_\star-}(x+\alpha \Delta L_{t_\star})dx
	\]
	for all $S\in \mathcal{B}(\mathbb{R})$, where $\Delta L_{t_\star}$ is uniquely specified by (\ref{PJC}). Consequently, after the blow-up, the system restarts from the new density $V_{t_\star}$ of $\nu_{t_\star}$ given by
	\[
	V_{t_\star}(x) = V_{t_\star -}(x + \alpha \Delta L_{t_\star}) \qquad \text{for all } x \geq 0.
	\]
	In turn, although $V_{t_\star -}$ is not known to be analytic at $x=0$, it follows from $\Delta L_{t_\star} > 0$ and $V_{t_\star-}$ being analytic in the interior that we indeed have analyticity of the new density $V_{t_\star}$ at $x=0$. Therefore we have a series expansion
	\[
	V_{t_\star}(x) = V_{t_\star}(0) + \sum_{n \geq 1} c_n x^n,
	\qquad \textrm{for every } x \in [0, x_0],
	\]
for some $x_0 > 0$. If $V_{t_\star}(0) < \alpha^{-1}$ we have the required condition on $V_{t_\star}$ by taking $x_0$ sufficiently small.

If $V_{t_\star}(0) > \alpha^{-1}$, then, since the physical jump condition (\ref{PJC}) on $\Delta L$ ensures that $V_{t_\star}$ satisfies (\ref{eq:global_I}), by taking $x$ sufficiently small we have a contradiction. Therefore suppose $V_{t_\star}(0) = \alpha^{-1}$, then by the last case we cannot have $c_n = 0$ for all $n \geq 1$, so let $n_0 := \min\{ n : c_n \neq 0  \}$. For $x_1 > 0$ sufficiently small we have 
	\[
	V_{t_\star}(x) \geq \alpha^{-1} + (c_{n_0} + \varepsilon) x^{n_0},
	\qquad \textrm{for every } x \in [0,x_1],
	\]
	where $|\varepsilon| \leq \tfrac{1}{2} |c_{n_0}|$. Again if $c_{n_0} > 0$ then we contradict (\ref{eq:global_I}), hence $c_{n_0} < 0$ and so we have that $V_{t_\star}$ satisfies the condition (\ref{eq:global_IC}) in Theorem \ref{Intro_Prop_ShortTimeUniqueness}. Consequently, Theorem \ref{Intro_Prop_ShortTimeUniqueness} can be applied up to a small time after the first blow-up time $t_\star$. 
\end{proof}

Although we get small-time uniqueness after a blow-up, it is important to note that global uniqueness is out of reach of the techniques presented in this paper. If the system reaches the pathological states of Figure \ref{fig_global_badguys} at a continuity time, then we cannot propagate our uniqueness argument past this time.
	
In the time between the original submission and the present revision of this paper, global uniqueness of \eqref{MV} has been resolved in precisely the case of $f(x)=x$ and $Z_t=B_t$, by Delarue, Nadtochiy, and Shkolnikov \cite{francois_misha_sergey_2018}. One of their key technical results is to rule out the appearance of oscillating densities that change monotonicity infinitely often near the origin (as depicted in Figure \ref{fig_global_badguys}), provided the system is started from a bounded initial density that does not have this property.

\subsection{Analyticity of the density in the interior}

In this final subsection we present a proof of Proposition \ref{prop:analyticity}. As already mentioned, it goes via kernel smoothing, so for any $\delta > 0$ and any measure $\mu$ on $(0,\infty)$, we define the convolutions
\[
T_\delta \mu(x) := \int_0^\infty G_\delta(x_0, x) \mu(dx_0)
\quad \text{and} \quad T^r_\delta \mu(x) :=  \int_0^\infty G^r_\delta(x_0, x)  \mu(dx_0),
\]
for $x \geq 0$, where the kernels $G_\delta$ and $G^r_\delta$ are, respectively, the absorbing and reflecting Gaussian densities on the positive half-line, given by
\[
G_\delta(x_0,x) := \frac{1}{\sqrt{2 \pi \delta}} \Big\{ e^{-\frac{(x_0-x)^2}{2\delta}} - e^{-\frac{(x_0+x)^2}{2\delta}} \Big\}, \;\; G^r_\delta(x_0,x) := \frac{1}{\sqrt{2 \pi \delta}} \Big\{ e^{-\frac{(x_0-x)^2}{2\delta}} + e^{-\frac{(x_0+x)^2}{2\delta}} \Big\}.
\]
Substituting $\delta \mapsto 2\delta$, these are of course the Dirichlet and Neumann heat kernels on the positive half-line. As a warning to the careful reader, we will occasionally abuse notation and simply write $T_\delta\phi$ or $T^r_\delta\phi$ for the operators applied to the measure whose Radon--Nikodym derivative is the function $\phi$.

Given the flow of measures $\nu_t$ associated to \eqref{MV}, the crux of this section is a local $L^2$ energy estimate for the derivatives of $ T_\delta \nu_t(\cdot)$ derived in Proposition \ref{Analytic_Lem_Inductive_I}. Using this and the two lemmas below, Corollary \ref{Analytic_Cor_Smoothness} shows that $\nu_t$ in fact has a smooth density $V_t$. Finally, we finish the proof of Proposition \ref{prop:analyticity} via the specific form of the energy estimates and Sobolev embedding, which give suitable local pointwise bounds on the derivatives guaranteeing analyticity at all interior points of $(0,\infty)$.

\begin{lem}[Existence of weak derivatives]\label{Kernel_Lem_L2density}
	\label{Kernel_Lem_MainLemma}
	Suppose $\liminf_{\delta\to 0} \Vert \partial_x^n T_\delta \mu \Vert_2 < \infty$. Then $\mu$ has an $n^\textrm{th}$ weak derivative $\partial_x^n \mu \in L^2(0,\infty)$ and $\Vert \partial_x^n T_\delta \mu \Vert_2 \to \Vert \partial_x^n \mu \Vert_2$ as $\delta \to 0$.
\end{lem}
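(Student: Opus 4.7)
The plan is to prove the result in three stages: extract a subsequential weak $L^2$ limit of $\partial_x^n T_\delta\mu$; identify this limit as the $n$-th weak derivative of $\mu$ via integration by parts against test functions; and upgrade to full-sequence convergence of norms by reinterpreting $T_\delta$ as the restriction of the full-line heat operator applied to the odd extension of $\mu$.

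The first two stages are standard. The hypothesis yields $\delta_k\downarrow 0$ along which $\partial_x^n T_{\delta_k}\mu$ is bounded in $L^2(0,\infty)$, and Banach--Alaoglu then supplies a further subsequence (not relabelled) converging weakly to some $g\in L^2(0,\infty)$. For any $\phi\in C_c^\infty(0,\infty)$, integration by parts and the symmetry $G_\delta(y,x)=G_\delta(x,y)$ of the Dirichlet heat kernel give
\[
\int_0^\infty \partial_x^n T_{\delta_k}\mu\cdot\phi\,dx = (-1)^n\!\int_0^\infty T_{\delta_k}\phi^{(n)}\,d\mu.
\]
Since $\phi^{(n)}\in C_c((0,\infty))$ and $T_{\delta_k}\phi^{(n)}\to\phi^{(n)}$ pointwise on $(0,\infty)$ under a uniform sup-bound, dominated convergence sends the right-hand side to $(-1)^n\!\int\phi^{(n)}d\mu$, while weak $L^2$ convergence sends the left-hand side to $\int g\phi\,dx$. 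Hence $g=\partial_x^n\mu$ in the distributional sense, so $\partial_x^n\mu\in L^2(0,\infty)$, and weak lower semicontinuity already gives $\|\partial_x^n\mu\|_2\leq\liminf_{\delta\to 0}\|\partial_x^n T_\delta\mu\|_2$.

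For the matching upper bound, the key observation is $T_\delta\mu=\Gamma_\delta\ast\tilde\mu|_{(0,\infty)}$, where $\tilde\mu$ is the odd extension of $\mu$ to $\mathbb{R}$ and $\Gamma_\delta$ is the full-line Gaussian; this follows at once from $G_\delta(y,x)=\Gamma_\delta(y-x)-\Gamma_\delta(y+x)$. Therefore $\partial_x^n T_\delta\mu=\Gamma_\delta\ast\partial_x^n\tilde\mu$ on $(0,\infty)$. A direct computation identifies
\[
\partial_x^n\tilde\mu \;=\; \widetilde{V^{(n)}}\;+\;\sum_{j=0}^{\lfloor(n-1)/2\rfloor}2V^{(2j)}(0)\,\delta_0^{(n-1-2j)},
\]
where $V$ is the density of $\mu$, $\widetilde{V^{(n)}}$ is the parity extension of $V^{(n)}=g$, and the delta contributions arise precisely from the jumps at the origin in the piecewise derivatives $\partial_x^{2j}\tilde\mu$ (odd-indexed derivatives of odd functions are even and so continuous at $0$, whereas even-indexed derivatives are odd and jump by $2V^{(2j)}(0)$). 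After convolution with $\Gamma_\delta$, each singular term has $L^2(\mathbb{R})$-norm squared of order $V^{(2j)}(0)^2\delta^{1/2+2j-n}$, and the most singular contribution is at $j=0$ with magnitude $\sim V(0)^2\delta^{1/2-n}$. Since this dominates both the bounded contribution of $\Gamma_\delta\ast\widetilde{V^{(n)}}$ and all cross terms in the expansion of $\|\Gamma_\delta\ast\partial_x^n\tilde\mu\|_{L^2(\mathbb{R})}^2$, the $\liminf<\infty$ hypothesis forces $V(0)=0$, and iterating then forces $V^{(2j)}(0)=0$ for every relevant $j$. With the boundary singularities eliminated, $\partial_x^n\tilde\mu=\widetilde{V^{(n)}}\in L^2(\mathbb{R})$ with $\|\partial_x^n\tilde\mu\|_{L^2(\mathbb{R})}^2=2\|g\|_2^2$. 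Parity preservation by convolution with the even kernel $\Gamma_\delta$ together with $L^2(\mathbb{R})$-contractivity of $\Gamma_\delta\ast$ then yield
\[
\|\partial_x^n T_\delta\mu\|_{L^2(0,\infty)}^2 = \tfrac{1}{2}\|\Gamma_\delta\ast\partial_x^n\tilde\mu\|_{L^2(\mathbb{R})}^2 \leq \|g\|_2^2,
\]
and combined with the weak lower-semicontinuity bound this forces $\|\partial_x^n T_\delta\mu\|_2\to\|g\|_2=\|\partial_x^n\mu\|_2$ along the full sequence $\delta\downarrow 0$.

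The main obstacle is the scaling analysis in the third paragraph: it is not enough to note that the several delta-type singularities carry distinct $\delta$-scalings, one must also control the cross terms with the bounded part $\Gamma_\delta\ast\widetilde{V^{(n)}}$ to argue that the leading singular contribution cannot be cancelled by any lower-order combination. This amounts to an iterative argument in which each even-indexed boundary trace $V^{(2j)}(0)$ is successively pinned to zero by the $\liminf$ hypothesis, together with the direct scaling $\|\Gamma_\delta^{(m)}\|_{L^2(\mathbb{R})}^2\sim\delta^{-1/2-m}$ for the derivatives of the Gaussian kernel.
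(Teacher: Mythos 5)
Your argument is correct, and it reaches the norm identity by a genuinely different route than the paper. The first two stages (weak compactness plus identification of the limit through $\int_0^\infty \partial_x^n T_\delta\mu\cdot\phi\,dx=(-1)^n\int_0^\infty T_\delta\phi^{(n)}d\mu$ and lower semicontinuity) coincide with the paper's opening step. For the matching upper bound, however, the paper uses the kernel duality $\partial_x G_\delta(x_0,x)=-\partial_{x_0}G^r_\delta(x_0,x)$ to move all $n$ derivatives onto the measure, writing $\partial_x^n T_\delta\mu=T^r_\delta(\partial^n\mu)$ and invoking the $L^2$-contractivity of $T^r_\delta$; you instead pass to the odd extension $\tilde\mu$, write $\partial_x^n T_\delta\mu=(\Gamma_\delta\ast\partial_x^n\tilde\mu)|_{(0,\infty)}$, and make explicit the boundary contributions $2V^{(2j)}(0)\,\delta_0^{(n-1-2j)}$ which are precisely the boundary terms that the paper's one-line integration by parts discards, and you then show the hypothesis itself forces these traces to vanish via the scaling $\Vert\Gamma_\delta^{(m)}\Vert_{L^2(\mathbb{R})}^2\asymp\delta^{-1/2-m}$, before concluding by contractivity and parity on the full line. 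What the paper's route buys is brevity; what yours buys is an explicit accounting of the boundary behaviour (showing, e.g., that finiteness of the liminf already forces $V(0+)=0$), which makes the contraction step unambiguous. Two points you should state rather than assert: first, that stages 1--2 give $V\in H^n$ up to the origin (since $\partial_x^n\mu=g\in L^2(0,\infty)$ integrates up), so the traces $V^{(2j)}(0+)$ appearing in your jump formula are well defined; second, the cross-term control in the scaling analysis, which is immediate from the reverse triangle inequality once you take $j_0$ minimal with $V^{(2j_0)}(0)\neq0$, because every singular exponent $\tfrac12+2j-n$ is negative on the relevant range and they are strictly ordered in $j$, so the $j_0$ term dominates both the remaining singular terms and the bounded term $\Gamma_\delta\ast\widetilde{V^{(n)}}$. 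With those two sentences added, your proof is complete.
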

\begin{proof}
	Since $\liminf_{\delta\to 0} \Vert \partial_x^n T_\delta \mu \Vert_2 < \infty$, a standard weak compactness argument gives that $(\partial_x^n T_{\delta} \mu)_{\delta>0}$ is convergent with strong limit $h=\partial_x^n \mu$ in $L^2(0,\infty)$, and $\Vert h \Vert_2 \leq \liminf_{\delta \to 0} \Vert \partial_x^n T_{\delta} \mu \Vert_2$. Moreover, we can check that
	\[
	\partial_x^n T_\delta \mu (x) 
	= \langle \mu ,  \partial_x^n G_\delta(\cdot , x) \rangle
	= (-1)^n \langle \mu, \partial^n_{x_0} G^r_\delta(\cdot , x) \rangle
	= (T^r_\delta (\partial^n \mu))(x)
	= T^r_\delta h.
	\]
	But $T^r_\delta$ is an $L^2$-contraction, so we deduce that	$\limsup_{\delta \to 0} \Vert \partial_x^n T_\delta \mu \Vert_2	\leq \Vert h \Vert_2$, and hence $\Vert \partial_x^n T_\delta\mu\Vert$ converges to $\Vert h \Vert_2$.
\end{proof}

Regardless of the initial condition $\nu_0$, \cite[Prop.~2.1]{hambly_ledger_sojmark_2018} shows that $\nu_t$ has a bounded density $V_t : (0,\infty) \to (0,\infty)$ for all positive times $t>0$. Moreover, if $\nu_0$ has a density in $L^2$, then we have control on the $L^2$ norms of each $V_t$ which will serve as the base case for the induction argument in Corollary \ref{Analytic_Cor_Smoothness} below.
\begin{lem}\label{lem:L2_initial_bound} If $\nu_0$ has a density $V_0\in L^2(0,\infty)$, then $	\Vert V_t \Vert_2 \leq \Vert V_0 \Vert_2$ for every $t \geq 0$.
\end{lem}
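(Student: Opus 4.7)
The plan is to exploit the fact that in the setting of \eqref{MV}, the process $L$ is deterministic, so that $X_t = X_0 + B_t - \alpha L_t$ admits an explicit unconditional density on all of $\mathbb{R}$. The strategy is then to dominate $V_t$ pointwise by this unconditional density and finish with Young's convolution inequality.

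Concretely, I would first write down the unconditional density: conditioning on $X_0$, one has $X_t \sim \mathcal{N}(X_0 - \alpha L_t,\,t)$, so the density of $X_t$ on $\mathbb{R}$ is
\[
p_t(x) \,=\, (V_0 \ast g_t)(x + \alpha L_t),
\]
where $g_t$ denotes the centred Gaussian density with variance $t$. Next I would record the elementary domination: for every Borel $S \subseteq (0,\infty)$,
\[
\nu_t(S) \,=\, \mathbb{P}(X_t \in S,\,t < \tau) \,\leq\, \mathbb{P}(X_t \in S) \,=\, \int_S p_t(x)\,dx,
\]
which gives $0 \leq V_t(x) \leq p_t(x)$ for a.e.~$x > 0$.

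Combining these two observations with translation-invariance of Lebesgue measure and Young's convolution inequality then yields
\[
\|V_t\|^2_{L^2(0,\infty)} \,\leq\, \|p_t\|^2_{L^2(\mathbb{R})} \,=\, \|V_0 \ast g_t\|^2_{L^2(\mathbb{R})} \,\leq\, \|V_0\|^2_{L^2}\,\|g_t\|^2_{L^1} \,=\, \|V_0\|_2^2,
\]
which is the desired bound. I do not expect any serious technical obstacle; the only conceptual point worth noting is that, in contrast to the genuinely random $L$ that arises in \eqref{CMV}, the deterministic nature of $L$ here makes the unconditional law of $X_t$ immediately available as a convenient dominating object, and this is precisely what allows the argument to be so short.
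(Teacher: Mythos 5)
Your argument is correct and is essentially the paper's own proof: both rest on dominating $V_t$ pointwise by the density of the unkilled process $X_0+B_t-\alpha L_t$ (a Gaussian smoothing of $V_0$, shifted by the deterministic $\alpha L_t$) and then using the $L^2$-contractivity of convolution with a probability kernel. The only cosmetic difference is that you invoke Young's inequality $\Vert V_0 \ast g_t\Vert_2 \leq \Vert V_0\Vert_2\Vert g_t\Vert_1$, whereas the paper carries out the same estimate by hand via Cauchy--Schwarz and Fubini.
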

\begin{proof}
	Arguing as in the proof of \cite[Prop.~2.1]{hambly_ledger_sojmark_2018} we get
	\[
	V_t(x) \leq \int_0^\infty \frac{1}{\sqrt{2\pi t}} e^{-\frac{(x-x_0+\alpha L_t)^2}{2t}}V_0(x_0)dx_0
	\leq \Big( \int_0^\infty \frac{1}{\sqrt{2\pi t}} e^{-\frac{(x-x_0+\alpha L_t)^2}{2t}}V_0(x_0)^2 dx_0 \Big)^{1/2},
	\]
	where the second inequality follows from Cauchy--Schwarz. Squaring and integrating with respect to $x$ gives the result.
\end{proof}

\subsubsection{Proof of Proposition \ref{prop:analyticity}}

Let $\langle \nu_t, \phi \rangle :=\int \!\phi(x) \nu_t(dx) = \mathbb{E}[\phi(X_t)\mathbf{1}_{t<\tau}]$ and, for the rest of this section, let $t_\star$ be the first jump time of $L$. Be definition, $X$ is continuous strictly before time $t_\star$. Thus, applying It\^o's formula to the stopped process $X_{\cdot\land\tau}$, we obtain the weak PDE
\[
\langle \nu_{t}, \phi \rangle 
= \langle \nu_0 , \phi \rangle
+ \frac{1}{2} \int^t_0 \langle \nu_s , \phi^{\prime\prime} \rangle ds 
- \alpha \int^{t}_0 \langle \nu_s ,  \phi^{\prime} \rangle dL_s, \;\; t\in[0,t_\star)
\]
for test functions $\phi \in C^2(0,\infty)$ with $\phi(0)=0$, where we have used that $\phi(0)=0$ implies $\phi(X_{t\land\tau})=\mathbf{1}_{t<\tau}\phi(X_t)$ for $t<t_\star$. Now take $\phi := T_\delta \psi$ for an arbitrary $\psi \in C^\infty_0(0,\infty)$. Then $\phi \in C^\infty_0(0,\infty)$, so integrating by parts and differentiating $n$ times, we can deduce that
\[
d \partial_x^n T_\delta \nu_t(x) = \tfrac{1}{2} \partial_x^{n+2} T_\delta \nu_t(x) dt + \alpha \partial_x^{n+1} T^r_\delta \nu_t(x) dL_t.
\]
for $x \geq 0$ (a.e.). Using that $d(\partial_x^n T_\delta \nu_t)^2=2\partial_x^n T_\delta \nu_t(x)d \partial_x^n T_\delta \nu_t(x)$, and rearranging, we get
\begin{align}
\label{eq:Analytic_Square}
d \bigl(\partial_x^n T_\delta \nu_t(x)\bigr)^2 
&= \partial_x^n T_\delta \nu_t(x)  \partial_x^{n+2} T_\delta \nu_t(x) dt 
+ 2\alpha  \partial_x^n T_\delta \nu_t(x) \partial_x^{n+1} T_\delta \nu_t(x) dL_t   \\
&\qquad + 4\alpha  \partial_x^n T_\delta \nu_t(x) \partial_x^{n+1} R_\delta  \nu_t(x) dL_t \nonumber ,
\end{align}
where we have introduced the remainder term
\[
R_\delta \nu_t(x) := \int_0^\infty \frac{1}{\sqrt{2\pi \delta} } e^{-\frac{(x+x_0)^2}{2 \delta}} \nu_t(dx_0). 
\]


The point of \eqref{eq:Analytic_Square} is of course to obtain $L^2$ estimates for the derivatives of $T_\delta \nu_{t}$. Since we are only interested in interior regularity, and hence only need local estimates, we can rely on cut-off functions for our arguments. To this end, we start by fixing any two open sets $U\Subset W\Subset(0,\infty)$, where `$\Subset$' denotes compact containment.  Then we let $\zeta$ be a smooth cut-off function with $\zeta=1$ on $U$,  $\zeta \in (0,1)$ on $W\setminus U$, and $\zeta=0$ otherwise. Note that $|\partial_x\zeta|+|\partial_{xx}\zeta| \leq C\mathbf{1}_{W\setminus U}$, where $C$ only depends on $W$ and $U$.

\begin{prop}[Smoothed energy estimate]
	\label{Analytic_Lem_Inductive_I}
	For all integers $a\geq2$ and $b\geq1$ we have
	\begin{align*}
	t_\star^b &\Vert \zeta^{\tfrac{a}{2}} \partial_x^n T_\delta \nu_{t_\star-} \Vert_2^2
	+\int^{t_\star}_0 t^b \Vert  \zeta^{\tfrac{a}{2}}\partial_x^{n+1} T_\delta \nu_t  \Vert_2^2 dt \\
	&\quad\leq c_1 a(a-1) \int^{t_\star}_0 t^b \Vert \zeta^{\tfrac{a-2}{2}} \partial_{x}^{n} T_\delta \nu_t \Vert_2^2 dt 
	+ b\int^{t_\star}_0 t^{b-1}  \Vert  \zeta^{\tfrac{a}{2}}\partial_x^{n} T_\delta \nu_t  \Vert_2^2 dt + o(1),
	\end{align*}
	as $\delta \to 0$, where $t_\star > 0$ the first jump time of $L$.
\end{prop}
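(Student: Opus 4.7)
The plan is to multiply the Itô-type decomposition \eqref{eq:Analytic_Square} for $(\partial_x^n T_\delta\nu_t(x))^2$ by the weight $\zeta^a(x)\, t^b$ and integrate over $(x,t)\in(0,\infty)\times[0,t_\star)$, then identify the terms by integration by parts in both variables. Writing $u_t:=\partial_x^n T_\delta\nu_t$ for brevity, the left-hand side yields, via integration by parts in $t$ (the boundary term at $0$ vanishes since $b\geq1$), exactly $t_\star^b\Vert\zeta^{a/2}u_{t_\star-}\Vert_2^2 - b\int_0^{t_\star} t^{b-1}\Vert\zeta^{a/2}u_t\Vert_2^2\,dt$, accounting for the boundary term on the left of the proposition and the $bt^{b-1}$ correction on the right.

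For the diffusion contribution $\int\zeta^a u_t\,\partial_x^2 u_t\,dx$, I would integrate by parts twice in $x$ (the compact support of $\zeta$ in $(0,\infty)$ kills all boundary terms). The first integration produces $-\Vert\zeta^{a/2}\partial_x u_t\Vert_2^2$ together with a cross-term $-a\int\zeta^{a-1}\partial_x\zeta\cdot u_t\,\partial_x u_t\,dx$. Rewriting $u\,\partial_x u=\tfrac{1}{2}\partial_x(u^2)$ and integrating by parts once more converts this cross-term into quadratic expressions in $u_t$ weighted by $\tfrac{a(a-1)}{2}\zeta^{a-2}(\partial_x\zeta)^2$ and $\tfrac{a}{2}\zeta^{a-1}\partial_{xx}\zeta$; using $\zeta\leq1$, the uniform bounds $|\partial_x\zeta|,|\partial_{xx}\zeta|\leq C\mathbf{1}_{W\setminus U}$, and $a\leq a(a-1)$ for $a\geq2$, both are dominated by a constant multiple of $a(a-1)\zeta^{a-2}u_t^2$. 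This produces the $-\int t^b\Vert\zeta^{a/2}\partial_x u_t\Vert_2^2\,dt$ contribution (which moves to the left of the inequality) and the $c_1 a(a-1)\int t^b\Vert\zeta^{(a-2)/2}u_t\Vert_2^2\,dt$ correction.

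The two $dL_t$ terms are handled separately. For the feedback term $2\alpha\int\zeta^a u_t\,\partial_x u_t\,dx$, applying $u\,\partial_x u=\tfrac{1}{2}\partial_x(u^2)$ and one spatial integration by parts give $-\alpha a\int\zeta^{a-1}\partial_x\zeta\cdot u_t^2\,dx$, a lower-order expression localized in $W\setminus U$ which, integrated against $t^b\,dL_t$ with $\int_0^{t_\star}dL_t\leq L_{t_\star-}<1$, is absorbed into the $c_1 a(a-1)$ correction after enlarging the constant. The remainder term involves $\partial_x^{n+1}R_\delta\nu_t(x)$, whose kernel $(2\pi\delta)^{-1/2}\exp(-(x+x_0)^2/(2\delta))$ and all spatial derivatives decay exponentially in $1/\delta$ uniformly for $x$ bounded away from zero (as is the case on $\mathrm{supp}\,\zeta\subset W$); combined with the unit mass of $\nu_t$ and $\int dL_t\leq1$, this contribution is $o(1)$ as $\delta\to0$.

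The main technical obstacle is the feedback $dL_t$ term, since $L$ may develop unbounded derivative as $t\uparrow t_\star$ and $dL_t$ cannot be compared to $dt$ by a uniform density bound. The resolution is that, after the second integration by parts, the relevant integrand depends only on values of $u_t$ through $\zeta^{a-1}\partial_x\zeta$, which is supported in the fixed annular region $W\setminus U$ a fixed positive distance from the origin; a uniform-in-$t$ control of the lower-order quadratic $\Vert\zeta^{(a-2)/2}u_t\Vert_2^2$ on this region, together with the finite total variation $L_{t_\star-}<1$, then makes the whole feedback contribution absorbable into the $c_1 a(a-1)$ term.
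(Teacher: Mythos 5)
Your overall scheme---multiplying \eqref{eq:Analytic_Square} by $\zeta^a$, weighting by $t^b$, integrating by parts in $t$ and in $x$, and discarding the reflecting remainder $R_\delta$ as $O(e^{-w^2/2\delta})=o(1)$ on $\mathrm{supp}\,\zeta\subset W$---is the same as the paper's, and your treatment of the time weight, the diffusion term and the remainder is sound. The gap is in the feedback term. As you correctly compute, after integration by parts it is $-\alpha\int_0^{t_\star} t^b\bigl(\int_0^\infty \partial_x(\zeta^a)\,(\partial_x^n T_\delta\nu_t)^2\,dx\bigr)\,dL_t$, an integral against $dL_t$, and you propose to absorb it into $c_1a(a-1)\int_0^{t_\star}t^b\Vert\zeta^{(a-2)/2}\partial_x^nT_\delta\nu_t\Vert_2^2\,dt$ after ``enlarging the constant''. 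That step fails: $L$ is only known to be continuous and increasing on $[0,t_\star)$ (its derivative is exactly what may blow up as $t\uparrow t_\star$), so a $dL_t$-integral cannot be dominated by the corresponding $dt$-integral by any constant. Your fallback---bounding it by $L_{t_\star-}\cdot\sup_{t<t_\star}\bigl(t^b\Vert\zeta^{(a-2)/2}\partial_x^nT_\delta\nu_t\Vert_2^2\bigr)$---does not help either: for $n\geq1$ there is no uniform-in-$t$, uniform-in-$\delta$ control of this quantity at this stage (only $n=0$ enjoys the uniform bound of Lemma~\ref{lem:L2_initial_bound}; the induction of Corollary~\ref{Analytic_Cor_Smoothness} propagates only time-integrated bounds and the value at $t_\star-$), and even granted such control, a supremum in $t$ is not of the form of the $dt$-integral on the right-hand side of Proposition~\ref{Analytic_Lem_Inductive_I}, so you would be proving a different estimate, and the factorial bookkeeping $I(n,2n,n)\leq C^n(2n)!$ in the proof of Proposition~\ref{prop:analyticity} would no longer go through as stated.

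This is precisely where the paper's argument takes a different route: it never attempts to compare $dL_t$ with $dt$. In the paper's energy inequality the $2\alpha\,\partial_x^nT_\delta\nu_t\,\partial_x^{n+1}T_\delta\nu_t\,dL_t$ contribution is recorded as a nonpositive $dL_t$-term and simply discarded (its sign is also what allows the small-$\eta$ Young piece of the $R_\delta$ cross term to be absorbed), so the only $dL_t$-term surviving on the right-hand side is the exponentially small remainder. Whether that term is genuinely sign-definite requires care with the cut-off, since the correct identity is the one you wrote, $-\alpha\int\partial_x(\zeta^a)\,(\partial_x^nT_\delta\nu_t)^2\,dx\,dL_t$; but in any case the viable strategies are to render this $dL_t$-quadratic nonpositive or $o(1)$, not to absorb it into the $dt$-terms. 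As written, your proposal does not close at this step.
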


\begin{proof}
	Multiplying (\ref{eq:Analytic_Square}) by $\zeta^a$ and using the integration-by-parts formulae
	\[
	\int g \cdot f \cdot f' = - \int g |f|^2
	\quad \text{and} \quad\int g \cdot f \cdot f'' =  -\int g |f'|^2 + \frac{1}{2}\int g'' |f|^2,
	\]
	we get
	\begin{align*}
	d\Vert \zeta^{\tfrac{a}{2}} \partial_x^n T_\delta \nu_t \Vert_2^2
	+ \Vert  \zeta^{\tfrac{a}{2}}\partial_x^{n+1} T_\delta \nu_t  \Vert_2^2 & dt 
	=\tfrac{1}{2} \Vert |\partial_{xx} \zeta^a|^{\tfrac{1}{2}} \partial_x^n T_\delta \nu_t \Vert_2^2 dt 
	- 2\alpha \Vert \zeta^{\tfrac{a}{2}} \partial_x^n T_\delta \nu_t \Vert_2^2 dL_t \\
	&+4\alpha \Big( \int^\infty_0 \zeta^a(x) \partial_x^{n} T_\delta \nu_{t}(x) \partial_x^{n+1} R_\delta \nu_t(x) dx\Big) dL_t.
	\end{align*}
	Since $|\partial_x\zeta|, |\partial_{xx}\zeta| \leq C$, Young's inequality with small enough parameter  $\eta > 0$ gives
	\begin{align*}
	d\Vert \zeta^{\tfrac{a}{2}} \partial_x^n T_\delta \nu_t \Vert_2^2
	&+ \Vert  \zeta^{\tfrac{a}{2}}\partial_x^{n+1} T_\delta \nu_t  \Vert_2^2 dt \\
	&\leq c_1 a(a-1) \Vert \zeta^{\tfrac{a-2}{2}} \partial_{x}^{n} T_\delta \nu_t \Vert_2^2 dt 
	+ 4\alpha \eta^{-1} \Vert \zeta^{\tfrac{a}{2}} \partial_x^{n+1} R_\delta \nu_t \Vert_2^2 dL_t,
	\end{align*}
	where we have used $a\zeta^{(a-1)/2} \leq a(a-1) \zeta^{(a-2)/2}$.
	Differentiating $t\mapsto t^b \Vert \zeta^{\tfrac{a}{2}} \partial_x^n T_\delta \nu_t \Vert_2^2$ with respect to $t$ and taking the range of integration up to $t_\star$ gives
	\begin{align*}
	t_\star^b \Vert& \zeta^{\tfrac{a}{2}} \partial_x^n T_\delta \nu_{t_\star-} \Vert_2^2
	+\int^{t_\star}_0 t^b \Vert  \zeta^{\tfrac{a}{2}}\partial_x^{n+1} T_\delta \nu_t  \Vert_2^2 dt 
	+ b\int^{t_\star}_0 t^{b-1}  \Vert  \zeta^{\tfrac{a}{2}}\partial_x^{n} T_\delta \nu_t  \Vert_2^2 dt \\
	&\leq c_1 a(a-1) \int^{t_\star}_0 t^b \Vert \zeta^{\tfrac{a-2}{2}} \partial_{x}^{n} T_\delta \nu_t \Vert_2^2 dt 
	 + 4\alpha \eta^{-1} \int^{t_\star}_0 t^b \Vert \zeta^{\tfrac{a}{2}} \partial_x^{n+1} R_\delta \nu_t \Vert_2^2 dL_t. 
	\end{align*}

	It remains to show the final term above vanishes as $\delta \to 0$. Writing $p_\delta(\cdot)$ for the standard Gaussian transition kernel, it follows from the definition of $R_\delta$ that
	\begin{align*}
	|\partial_x^{n+1} R_\delta \nu_t(x)|
	\leq \Big| \partial_x^{n+1}\! \int_0^\infty \! p_\delta(x+y) \nu_t(dy) \Big| \leq \int^\infty_0 |Q(\delta^{-\frac{1}{2}}y, \delta^{-\frac{1}{2}}x)|p_\delta(x+y) \nu_t(dy),
	\end{align*} 
	for a two-variable polynomial $Q$. Note $p_\delta(x+y) \leq p_\delta(y)\cdot e^{-x^2 / 2\delta}$. Applying Lemma \ref{Kernel_Lem_L2density} and Cauchy--Schwarz,
	we conclude $|\partial_x^{n+1} R_\delta \nu_t(x)| = O(e^{-x^2 / 4\delta})$ uniformly in $t$. Since $\zeta$ is supported on $W$,
	\[
	\Vert \zeta^{\tfrac{a}{2}} \partial_x^{n+1} R_\delta \nu_t \Vert_2 
	= O(e^{-w^2/2\delta})
	\]
	uniformly in $t$, where $w := \inf W>0$.  This is sufficient to complete the proof. 
\end{proof}

As Proposition \ref{Analytic_Lem_Inductive_I} gives control over the $(n+1)^\textrm{th}$ spatial derivative in terms of the $n^\textrm{th}$ derivative, we can use it to inductively prove that $V_{t_\star-}$ has weak derivatives of all orders, and is therefore smooth in the interior of $(0,\infty)$.

\begin{cor}[Smoothness]
	\label{Analytic_Cor_Smoothness}
	If $\nu_0$ has a density $V_0 \in L^2(0,\infty)$, then $V_{t_\star-} \in C^\infty(0,\infty)$. Furthermore $V_t \in C^\infty(0,\infty)$ for almost all $t \in (0,t_\star)$ and 
	\begin{align*}
	t_\star^b \Vert \zeta^{\tfrac{a}{2}} \partial_x^n V_{t_\star-} \Vert_2^2
	&+\int^{t_\star}_0 t^b \Vert  \zeta^{\tfrac{a}{2}}\partial_x^{n+1} V_t  \Vert_2^2 dt \\
	&\leq c_1 a(a-1) \int^{t_\star}_0 t^b \Vert \zeta^{\tfrac{a-2}{2}} \partial_{x}^{n} V_t \Vert_2^2 dt 
	+ b\int^{t_\star}_0 t^{b-1}  \Vert  \zeta^{\tfrac{a}{2}}\partial_x^{n} V_t  \Vert_2^2 dt.
	\end{align*}
\end{cor}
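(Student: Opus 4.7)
The plan is to proceed by induction on $n \geq 0$, using Proposition \ref{Analytic_Lem_Inductive_I} to produce $L^2$-bounds on smoothed derivatives, and a local form of Lemma \ref{Kernel_Lem_L2density} together with weak compactness to remove the mollifier $T_\delta$. The base case $n = 0$ is that $\Vert V_t \Vert_2 \leq \Vert V_0 \Vert_2$ uniformly in $t$ by Lemma \ref{lem:L2_initial_bound}; this bounds the right-hand side of Proposition \ref{Analytic_Lem_Inductive_I} at level $n = 0$ uniformly in $\delta$ (since $b \geq 1$ makes $t^{b-1}$ integrable at the origin), yielding uniform control on $\int_0^{t_\star} t^b \Vert \zeta^{a/2} \partial_x T_\delta \nu_t \Vert_2^2\, dt$ and $t_\star^b \Vert \zeta^{a/2} T_\delta \nu_{t_\star-} \Vert_2^2$.

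For the inductive step, I would assume that the corollary holds at level $n-1$. This supplies $\int_0^{t_\star} t^b \Vert \zeta^{a/2} \partial_x^n V_t \Vert_2^2\, dt < \infty$ for all admissible $\zeta, a, b$, and combined with a local version of Lemma \ref{Kernel_Lem_L2density} it gives the convergence $\Vert \zeta^{a/2} \partial_x^n T_\delta \nu_t \Vert_2 \to \Vert \zeta^{a/2} \partial_x^n V_t \Vert_2$, so the right-hand side of Proposition \ref{Analytic_Lem_Inductive_I} at level $n$ converges to the right-hand side of the corollary. The left-hand side is handled by weak compactness: for a.e.\ $t$, Fatou yields $\liminf_{\delta \to 0} \Vert \zeta^{a/2} \partial_x^{n+1} T_\delta \nu_t \Vert_2 < \infty$, so along a subsequence $\zeta^{a/2} \partial_x^{n+1} T_\delta \nu_t \rightharpoonup g_t$ in $L^2$; testing against $\phi \in C_0^\infty(U)$, integrating by parts, and using the distributional convergence $T_\delta \nu_t \to V_t$ in $L^2_{\mathrm{loc}}$ identifies $g_t = \partial_x^{n+1} V_t$ on $U$, which is an arbitrary open set with $U \Subset (0,\infty)$. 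Lower semicontinuity of the $L^2$ norm under weak convergence then yields the unsmoothed estimate, and the same argument applied at the single time $t = t_\star-$ produces $\partial_x^n V_{t_\star-} \in L^2_{\mathrm{loc}}(0,\infty)$.

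Iterating gives weak derivatives of $V_t$ and $V_{t_\star-}$ of all orders in $L^2_{\mathrm{loc}}(0,\infty)$, and the one-dimensional Sobolev embedding $H^k_{\mathrm{loc}} \hookrightarrow C^{k-1}_{\mathrm{loc}}$ then delivers $V_t, V_{t_\star-} \in C^\infty(0,\infty)$. The main obstacle is making the local version of Lemma \ref{Kernel_Lem_L2density} precise, since that lemma controls only the full $L^2$ norm of $\partial_x^n T_\delta \mu$ and commuting $T_\delta$ with multiplication by $\zeta^{a/2}$ introduces a commutator $[T_\delta, \zeta^{a/2}]$. This is addressed by nesting a further cutoff $\zeta'$ with $\zeta' \equiv 1$ on a neighbourhood of $\operatorname{supp} \zeta$; the commutator then contributes only $O(e^{-c/\delta})$ terms on $\operatorname{supp} \zeta$, by the same Gaussian-kernel decay mechanism used to discard the remainder $R_\delta$ in the proof of Proposition \ref{Analytic_Lem_Inductive_I}.
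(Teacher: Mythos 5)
Your proposal is correct and follows essentially the same route as the paper: induction on $n$ via the smoothed energy estimate of Proposition \ref{Analytic_Lem_Inductive_I}, with Lemma \ref{lem:L2_initial_bound} as the base case, Fatou plus Lemma \ref{Kernel_Lem_L2density} (and weak compactness) to remove the mollifier and pass to the limit in the estimate, and Sobolev embedding to conclude smoothness. The only difference is that you make explicit the localisation of Lemma \ref{Kernel_Lem_L2density} via a nested cutoff and an exponentially small commutator, a point the paper handles implicitly by the same Gaussian-decay mechanism used for the remainder $R_\delta$.
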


\begin{proof}
	Fix $n \geq 0$. Suppose that for all $U \Subset W \Subset (0,\infty)$, $a \geq 2n$, and $b \geq n$ we have
	\begin{equation}
	\label{eq:Analytic_Ind_I}
	\liminf_{\delta \to 0} \int^{t_\star}_0 t^b \Vert \zeta^{\tfrac{a}{2}} \partial_x^n T_\delta \nu_t \Vert_2^2 dt 
	< \infty.
	\end{equation}
	Then Fatou's Lemma and Lemma \ref{Kernel_Lem_MainLemma} imply that $\partial_x^n V_t$ exists and is in $L^2(U)$ for every $U \Subset (0,\infty)$ and almost all $t \in (0,t_\star)$, with
	\[
	\int^{t_\star}_0 t^b \Vert \zeta^{\tfrac{a}{2}} \partial_x^n T_\delta \nu_t \Vert_2^2 dt
	\to  \int^{t_\star}_0 t^b \Vert \zeta^{\tfrac{a}{2}} \partial_x^n V_t \Vert_2^2 dt < \infty.
	\]
	Therefore taking a $\liminf$ over $\delta$ in Proposition \ref{Analytic_Lem_Inductive_I}  gives that (\ref{eq:Analytic_Ind_I}) holds for $n+1$, for any $a \geq 2(n+1)$, $b \geq n+1$. Since (\ref{eq:Analytic_Ind_I}) holds for $n=0$ and all $a, b \geq 0$, by Lemma \ref{lem:L2_initial_bound}, using induction we conclude that the statement holds for all $n \geq 0$. Returning to Lemma \ref{Analytic_Lem_Inductive_I} once more, we deduce that
	\[
	\liminf_{\delta \to 0} \Vert  \partial_x^n T_\delta \nu_{t_\star-}\Vert_{L^2(U)}^2 \leq   \liminf_{\delta \to 0} \Vert \zeta^n \partial_x^n T_\delta \nu_{t_\star-} \Vert_2^2 < \infty,
	\]
	for every $n \geq 0$. Hence $V_{t_\star-}$ has weak derivatives of all orders at any point $x\in U$, and so it is smooth on $(0,\infty)$, since $U$ was arbitrary. Finally, sending $\delta \to 0$ in Lemma \ref{Analytic_Lem_Inductive_I} gives the estimate from the statement. 
\end{proof}

Finally we are in a position to complete the proof of Proposition \ref{prop:analyticity}.

\begin{proof}[Proof of Proposition \ref{prop:analyticity}] It suffices to prove the result at the first jump time, which we denote by $t_\star$ as in the above.
	Introduce the short-hand notation
	\[
	I(n,a,b) := \int^{t_\star}_0 t^b \Vert \zeta^{\tfrac{a}{2}} \partial_x^n V_t \Vert_2^2 dt,
	\]
	where we recall the cut-off $\zeta$ is defined for fixed $U \Subset W \Subset (0,\infty)$. Corollary \ref{Analytic_Cor_Smoothness} implies
	\[
	I(n+1, a, b) 
	\leq c_1 a (a-1) I(n, a-2,b) + bI(n,a,b-1)
	\leq (c_1 t_\star a(a-1) + b) I(n, a-2, b-1). 
	\]
	Iterating the argument gives
	\[
	I(n,2n,n) \leq I(0,0,0) \prod_{1 \leq i \leq n}(c_1 t_\star 2i(2i-1) + i) \leq C^n \cdot (2n)!,
	\]
	for $C>0$ a constant depending on $V_0$ and $\zeta$. Returning to Corollary \ref{Analytic_Cor_Smoothness} we have 
	\[
	t_\star^b \Vert \zeta^{\tfrac{a}{2}} \partial_x^n V_{t_\star-} \Vert_2^2 
	\leq (c_1 t_\star a(a-1) + b)I(n, a-2, b-1).
	\]
	Therefore setting $a = 2n +2$ and $b = n+1$ gives
	\[
	\Vert \partial_x^n V_{t_\star-} \Vert_{L^2(U)}^2
	\leq t_\star^{-(n+1)} (c_1 t_\star (2n+2)(2n+1) + n+1) \cdot C^n \cdot (2n)! = (C')^n \cdot (2n)!,
	\]
	for a further constant $C' > 0$ also depending on $t_\star$. Since $(2n)! \leq 4^n  (n!)^2$, we conclude that 
	\begin{equation}
	\label{eq:Analytic_mainproof_I}
	\Vert \partial_x^n V_{t_\star-} \Vert_{L^2(U)}
	\leq (C'')^n \cdot n!,
	\qquad \textrm{for every } n \geq 0.
	\end{equation}

	Morrey's inequality \cite[Sect.~5.6, Thm.~4]{evans_2010} gives a constant $c_2 > 0$ such that
	\[
	\Vert \partial_x^n V_{t_\star-} \Vert_{L^\infty(U)} 
	\leq c_2 \Vert  \partial_x^n V_{t_\star-}  \Vert_{H^1(U)}
	\leq (C''')^n \cdot n!,
	\]
	where we have applied (\ref{eq:Analytic_mainproof_I}), for $C'''>0$ a further constant independent of $n$. This inequality guarantees that $x\mapsto V_{t_\star-}(x)$ is analytic in the interior of $U$, and therefore, since $U \Subset (0,\infty)$ was arbitrary, we conclude that $x\mapsto V_{t_\star-}(x)$ is analytic at every $x\in(0,\infty)$. 
	\end{proof}

\paragraph*{Acknowledgements.}

We are grateful for the comments and suggestions from two anonymous referees which have improved the presentation of the paper.

\bibliographystyle{alpha}

\begin{thebibliography}{99} 
	
\bibitem{caceres_et_al_2011}
M.J.~C\'aceres, J.A.~Carrillo and B.~Perthame.
\newblock Analysis of nonlinear noisy integrate
\& fire neuron models: blow-up and steady states.
\newblock \emph{J. Math. Neurosci.}, 1(7), 2011.	

\bibitem{cell_1}
V.~Calvez, N.~Meunier, and R.~Voituriez.
\newblock A one-dimensional Keller-Segel equation with a drift issued from the boundary. \emph{C.~R.~Math.~Acad.~Sci.~Paris}, 348: 629--634, 2010.

\bibitem{cell_2}
V.~Calvez, R.~Hawkins, N.~Meunier, and R.~Voituriez.
\newblock Analysis of a nonlocal model for spontaneous cell polarization. \emph{SIAM J.
	Appl. Math.}, 72: 594--622, 2012.

\bibitem{carillo_et_al_2013}
J.A.~Carrillo, M.D.M.~Gonz\'{a}lez, M.P.~Gualdani  and  M.E.~Schonbek.
\newblock Classical solutions for a nonlinear Fokker--Planck equation arising in computational neuroscience.
\newblock \emph{Comm. Partial Differential Equations}, 38: 385--409, 2013.

\bibitem{carillo_et_al_2015}
J.A.~Carrillo, B.~Perthame, D.~Salort and D.~Smets.
\newblock Qualitative properties of solutions for the noisy integrate and fire model in computational neuroscience.
\newblock \emph{Nonlinearity}, 28(9): 3365, 2015.

\bibitem{cont_eric_2018}
R.~Cont and E. Schaanning.
\newblock Monitoring indirect contagion.
\newblock \emph{Journal of Banking and Finance}, 104: 85--102, 2019.


\bibitem{dirt_annalsAP_2015}
F.~Delarue, J.~Inglis, S.~Rubenthaler and E.~Tanr\'{e}.
\newblock {Global solvability of a networked integrate-and-fire model of
  McKean--Vlasov type}, \emph{Ann.~Appl.~Probab.} 25(4):2096--2133, 2015.

\bibitem{dirt_SPA_2015}
F.~Delarue, J.~Inglis, S.~Rubenthaler and E.~Tanr\'{e}.
\newblock Particle systems with singular mean-field self-excitation.
  {A}pplication to neuronal networks, \emph{Stochastic Process.~Appl.} 125(6): 2451--2492, 2015.
  
  \bibitem{francois_misha_sergey_2018}
  F.~Delarue, S.~Nadtochiy and M.~Shkolnikov.
  \newblock Global solutions to the supercooled Stefan problem with blow-ups: regularity and uniqueness.
   \newblock Preprint. Available at \texttt{arXiv:1902.05174}, 2019.
  
  
 \bibitem{dembo_tsai_2017}
 A.~Dembo and L.C.~Tsai.
 \newblock Criticality of a randomly-driven front.
 \newblock \emph{Arch. Ration. Mech. Anal.} 233: 643--699, 2019.
  

\bibitem{evans_2010}
L.C.~Evans.
\emph{Partial Differential Equations}. 
Graduate studies in mathematics. 
American Mathematical Society, 2010. 


\bibitem{fasano_primicerio_1981}
A.~Fasano and M.~Primicerio.
\newblock New results on some classical parabolic free-boundary problems. \emph{Quart. Appl. Math.}, 38: 439--460, 1981.

\bibitem{fasano_primicerio_1983}
A.~Fasano and M.~Primicerio.
\newblock A critical case for the solvability of Stefan-like problems. \emph{Math. Meth. Appl. Sci.}, 5: 84--96, 1983.

\bibitem{hambly_ledger_2017}
B.~Hambly and S.~Ledger. A stochastic McKean--Vlasov equation for absorbing diffusions on the half-line. \emph{Ann. Appl. Probab.} 27: 2698--2752, 2017.

\bibitem{hambly_ledger_sojmark_2018}
B.~Hambly, S.~Ledger, and A.~S{\o}jmark.
\newblock A McKean--Vlasov equation with positive feedback and blow-ups.
\emph{Ann. Appl. Probab.} 29: 2338-2373, 2019.

\bibitem{hambly_sojmark_2018}
B.~Hambly and A.~S{\o}jmark.
\newblock An SPDE model for systemic risk with endogenous contagion. \emph{Finance Stoch.} 157: 535--594, 2019.

\bibitem{inglis_talay_2015}
J.~Inglis and D.~Talay.
\newblock Mean-field limit of a stochastic particle system smoothly
interacting through threshold hitting-times and applications to neural networks with dendritic component. \emph{SIAM J. Math. Anal.} 47: 3884--3916, 2015.

\bibitem{lipton_vadim_reisinger_2018}
V.~Kaushansky, A.~Lipton and C.~Reisinger.
\newblock Semi-analytical solution of a McKean-Vlasov equation with feedback through hitting a boundary.
\newblock \emph{Discrete Contin. Dyn. Syst.} 24: 423--476, 2019.

\bibitem{vadim_reisinger_2018}
V.~Kaushansky and C.~Reisinger.
\newblock Simulation of particle systems interacting through hitting times. Preprint. Available at \texttt{arXiv:1805.11678}, 2018.

\bibitem{kurtz_ecp_2014} T.~Kurtz.
\newblock Weak and strong solutions of general stochastic models. \emph{Electron. Commun. Probab.} 19:  1--16, 2014.

\bibitem{ledger_sojmark_2018}
S.~Ledger and A.~S{\o}jmark.
\newblock At the mercy of the common noise:~blow-ups in a conditional McKean--Vlasov problem. \texttt{arXiv:1807.05126}, 2018.

\bibitem{cell_3}
T.~Lepoutre, N.~Meunier and N.~Muller.
\newblock Cell polarisation model:~The 1D case. \emph{J. Maths. Pures Appl.}, 101: 152--171, 2014.

\bibitem{nadtochiy_shkolnikov_2017}
S.~Nadtochiy and M.~Shkolnikov.
\newblock Particle systems with singular interaction through hitting times: Application in systemic risk modeling. \emph{Ann. Appl. Probab.} 29: 89--129, 2019.

\bibitem{nadtochiy_shkolnikov_2018}
S.~Nadtochiy and M.~Shkolnikov.
\newblock Mean field systems on networks, with singular interaction through hitting times.
\newblock To appear in \emph{Ann. Probab.}, 2019.
 
 \bibitem{sojmark_2019}
 A.~S{\o}jmark.
 \newblock Contagious McKean--Vlasov systems with positive feedback. DPhil Thesis, University of Oxford, 2019.

\end{thebibliography}

\end{document}